\renewcommand*\env@matrix[1][\arraystretch]{%
  \edef\arraystretch{#1}%
  \hskip -\arraycolsep
  \let\@ifnextchar\new@ifnextchar
  \array{*\c@MaxMatrixCols c}}
\gdef\n@te#1#2{\leavevmode\vadjust{%
 {\setbox\z@\hbox to\z@{\strut#1}%
  \setbox\z@\hbox{\raise\dp\strutbox\box\z@}\ht\z@=\z@\dp\z@=\z@%
  #2\box\z@}}}
\gdef\leftnote#1{\n@te{\hss#1\quad}{}}
\gdef\rightnote#1{\n@te{\quad\kern-\leftskip#1\hss}{\moveright\hsize}}
\gdef\?{\FN@\qumark}
\gdef\qumark{\ifx\next"\DN@"##1"{\leftnote{\rm##1}}\else
 \DN@{\leftnote{\rm??}}\fi{\rm??}\next@}}
\DeclareFontFamily{OT1}{wncyr}{\hyphenchar\font45
}
\DeclareFontShape{OT1}{wncyr}{m}{n}{%
   <5> <6> <7> <8> <9> gen * wncyr
   <10> <10.95> <12> <14.4> <17.28> <20.74>  <24.88>wncyr10}{}
\DeclareFontShape{OT1}{wncyr}{m}{it}{%
   <5> <6> <7> <8> <9> gen * wncyi
   <10> <10.95> <12> <14.4> <17.28> <20.74> <24.88> wncyi10}{}
\DeclareFontShape{OT1}{wncyr}{m}{sc}{%
   <5> <6> <7> <8> <9> <10> <10.95> <12> <14.4>
   <17.28> <20.74> <24.88>wncysc10}{}
\DeclareFontShape{OT1}{wncyr}{b}{n}{%
   <5> <6> <7> <8> <9> gen * wncyb
   <10> <10.95> <12> <14.4> <17.28> <20.74> <24.88>wncyb10}{}
\theoremstyle{plain}
\newtheorem{theorem}{Theorem}
\newtheorem{proposition}{Proposition}
\newtheorem{lemma}{Lemma}
\newtheorem{corollary}{Corollary}
\theoremstyle{definition}
\newtheorem{definition}{Definition}
\newtheorem{nothing*}[theorem]{}
\newtheorem{subnothing*}[sub]{}
\theoremstyle{remark}
\newtheorem*{remarks}{Remarks}
\def\cd{\hskip -.5mm\cdot\hskip -.5mm}
\newcommand{\ds}{{\raisebox{-.7\height}{$\cdot$}}\hskip -.65mm
{\raisebox{-.17\height}{$\cdot$}}\hskip -.65mm
{\raisebox{0.36\height}{$\cdot$}}\hskip -.65mm
{\raisebox{.9\height}{$\cdot$}}}
\begin{document}

\title[Finitely many orbits in orbit closures]{Algebraic groups whose orbit closures\\
contain only finitely many orbits}

\author[Vladimir  L. Popov]{Vladimir  L. Popov}
\address{Steklov Mathematical Institute,
Russian Academy of Sciences, Gub\-kina 8,
Moscow 119991, Russia}
\email{popovvl@mi-ras.ru}

\maketitle

\begin{abstract}
We explore connected affine algebraic groups $G$, which enjoy the following finiteness property $\rm (F)$: for every algebraic action of $G$, the closure of every $G$-orbit contains only finitely many $G$-orbits.
We obtain two main
results. First, we classify such groups. Namely, we prove that a connected affine algebraic group $G$ enjoys property $\rm (F)$ if and only if $G$ is either a torus or a product of a torus and a one-dimensional connected unipotent algebraic group.
Secondly, we obtain a characterization of such groups in terms of the modality of action in the sense of V. Arnol'd. Namely, we prove that a connected affine algebraic group $G$ enjoys property $\rm (F)$ if and only if for every irreducible algebraic variety $X$ endowed with an algebraic action of $G$, the modality of $X$ is equal to
$\dim X-\max_{x\in X} G\cd x$.
\end{abstract}

\

\vskip 2mm

\

\section{\bf 1.\;Introduction}
The phenomenon of
finiteness of the sets of orbits in
orbit closures of algebraic actions of a connected affine
algebraic group $G$, which
arises  under certain restrictions on $G$ and actions, has been known for a long time and plays an essential role in several mathematical theories.
Since every $G$-orbit is dense and open in its closure,
it is the phenomenon of finiteness of the sets of $G$-orbits in algebraic $G$-varieties containing dense open $G$-orbit or, saying differently,
in equivariant open embeddings of algebraic homogeneous spaces of $G$.

The first example is that
of a torus $G$. In this case, every $G$-orbit closure contains only finitely many $G$-orbits. This is a
key
fact
of the theory of toric embeddings \cite{TE73}
(see, e.g., also
\cite{Fu93}).

Histori\-cal\-ly, the next example, which generalizes the previous one,  is the class of all equivariant open embeddings of a fixed algebraic homo\-ge\-neous space $%%\mathcal O=
G/H$ of a connected complex reductive group $G$: by  \cite{A85}, every such embed\-ding contains only finitely many $G$-orbits if and only if  $H$
is a spherical subgroup of $G$.\;This fact is
a key
ingredient of the theory of spherical embeddings \cite{LV83}
(see, e.g., also
\cite{Ti11}).\;The earliest manifestation of this example is the case of parabolic $H$: then every equivariant embedding of $G/H$ coincides with $G/H$.

One more example is obtained if $G$ is uni\-po\-tent: in this case, every quasiaffine $G$-orbit closure coincides with this orbit \cite[Thm.\;2]{Ro61}. This is an important fact of the algeb\-raic transformation group theory.

In the present paper, we consider the
absolute
case, i.e.,
that,
in which no conditions on
actions under consideration are imposed.\;Namely, we
explore
con\-nec\-ted affine algebraic groups $G$ such that for every algebraic $G$-action,  every
$G$-orbit closure contains only finitely many $G$-orbits.
The
ex\-amp\-le of tori shows
that
such groups do exist.

First, we solve the classification problem for such groups.
The answer we found turned out to be rather unexpected for us. Namely,
our first main result
 is the following theorem (which is  a part of Theorem \ref{mcri} below).

\begin{theorem} \label{t1} Let $G$ be a connected affine algebraic group. The following properties are equivalent:
\begin{enumerate}[\hskip .9mm\rm(i)]
\item every $G$-orbit closure
of every algebraic action of $G$ contains only finitely many $G$-orbits;

 \item $G$ is either a torus or a direct product of a torus and a connected one-dimensional unipotent algebraic group.
     \end{enumerate}
     \end{theorem}

 Secondly, we find that
such groups play a special role in
the the\-ory of modality
 \cite[5.2]{PV94} that goes back to Arnol'd's works on the theory of sin\-gu\-la\-rities \cite{Ar75}.
 The modality of an algebraic action of
$G$ is the maximal number of parameters, on which an algebraic family
 of $G$-orbits of this action may depend. The actions of modality $0$ are precisely that
with a finite number of $G$-orbits.
In this paper we prove that the
groups enjoying the above properties (i), (ii)
can be characterized
in terms of modality.
Namely,
our second main result is the following theorem
(which is  a part of Theorem \ref{mcri} below).

\begin{theorem} \label{t2}
The conditions {\rm (i)}, {\rm (ii)} in
 Theorem {\rm \ref{t1}} hold if and only if
for every $G$-action
$\alpha$ on an irreducible algeb\-raic variety\;$X$,
\begin{equation}\label{mmrr}
\mbox{\it the modality of
$\alpha$ is equal to
%%equals
$\dim X - \max_{x\in X}\dim G\cdot x$.}
\end{equation}
\end{theorem}

Property  \eqref{mmrr} means that  the maximal number of parameters, on which an algebraic family
 of $G$-orbits in $X$ may depend, is attained on a family, which is open in $X$. If \eqref{mmrr} holds, we say that $\alpha$ is a {\it modality-regular} action.

For a given $G$, Theorems \ref{t1}, \ref{t2} reveal when
the phenomenon
of modality-regu\-la\-r action holds
unconditionally, i.e.,
 for every algebraic action of $G$.
In some
conditional forms the manifestations
 of this phenomenon
   has been disco\-vered
  earlier.

For instance, it follows from \cite[Thms.\;2 and 3]{V86} that if $G$ is
either a Borel subgroup $B$ of a connected complex reductive group $R$
or the unipotent radical $U$ of $B$,
then
the restriction
to $G$ of any action of $R$ on an irreducible algebraic
variety is modality-regular.

Theorems \ref{t1}, \ref{t2} imply that apart from $B$ and $U$, there are other closed subgroups $H$ of $R$ such that the restriction to $H$ of any algebraic action of $R$ on an irreducible algebraic variety is modality-regular. Namely, this property also holds if $H$ is either a torus or a product of a torus and a connected one-dimensional unipotent algebraic group. To the best of my knowledge,
at this writing (April 2020) a complete classi\-fication of subgroups $H$ enjoying this property is not known.

Note that such subgroups $H$ yield examples of non-extendable algebraic actions:
in view of Theorems \ref{t1}, \ref{t2},
if $H$ is not a torus or a product of a torus and a one-dimensional
connected unipotent algebraic group, then there exists
an algebraic action of $H$ on an irreducible algebraic variety $X$, which can
not be extended up to an algebraic action of $R$ on $X$.

Some other conditional forms of the manifestations of the phenomenon of modality-regular action
are found in \cite{Pop17};
for instance, it is proved that if $G$ is reductive, then
every visible action of $G$ on an irreducible affine variety is modality-regular.

  \vskip 1.2mm

This paper is organized as follows.  Our central result, Theorem\;\ref{mcri}, is formulated in Section 4 and proved in Section\;7.\;Section 3 contains the materials about modality, which is necessary
 for
stating Theorem\;\ref{mcri}.
In Sections 5 and 6 are col\-lec\-ted some auxiliary results on property (F) from the statement
of  Theorem\;\ref{mcri} and on modality, which we use in the proof of this theorem.\;In Section\;2
 are collected some conventions, notation, and termi\-no\-logy.

\vskip 2mm

{\it Acknowledgement.} I am grateful  to J.-P.\,Serre for remarks.

\section{\bf 2.\;Conventions, notation, and terminology}
 We fix an algebraically clo\-sed field $k$ of arbitrary characteristic.
 In what follows, we freely use the viewpoint,
standard notation and conventi\-ons of  \cite{B91}, \cite{S98}, \cite{PV94}, where also the proofs of unreferenced claims and/or the rele\-vant references can be found.

 A {\it variety} means an algebraic variety over $k$
(so an algebraic group means
an algebraic group over $k$).
If all irreducible components of a variety $X$ have the same dimension, then
 $X$ is called {\it equidimensional}.
Topological terms are related to the Zariski topology.

 Below all
actions of
algebraic groups
on varieties
are assumed to be
algebraic (i.e., regular/mor\-phic). If an algebraic group $G$ acts on a variety $X$, we say that $X$ is a $G$-{\it variety}.

The product of $d$ copies of
${\mathbf G}_a$ (respectively ${\mathbf G}_m$) is denoted by
${\mathbf G}_a^d$
(respectively ${\mathbf G}_m^d$).
If an algebraic group is isomorphic to ${\mathbf G}_m^d$ for some $d$,
it is called a {\it torus}.

\section{\bf 3.\;Modality}
 Let $H$ be a connected algebraic group.\;Any irreducible $H$-variety $F$ such that all $H$-orbits in $F$ have the same dimension $d$ is called an (algebraic) {\it family} of $H$-orbits depending on
\begin{equation}\label{mo}
{\rm mod}(H:F):=\dim F-d
\end{equation}
\noindent parameters; the integer ${\rm mod}(H:F)$ is called the {\it modality} of $F$.\;If $F\dashrightarrow F\ds H$ is a rational geometric quotient
of this action (i.e., the geometric quotient of an $H$-stable dense open subset of $F$; such a quotient exists by the Rosenlicht  theorem \cite[Thm.\;4.4]{PV94}), then
\begin{equation}\label{mtd}
{\rm mod}(H:F)=\dim F\ds H={\rm tr\,deg}_kk(F)^H
\end{equation}
and $F\ds H$ may be informally viewed as
the variety parametrizing typical $H$-orbits\;in\;$F$.

Given an  $H$-variety $Y$, we denote by ${\mathscr F}(Y)$ the set of all locally closed $H$-stable subsets of $Y$, which are the families. The integer
 \begin{equation}\label{mod}
 {\rm mod}(H:Y):=\underset{F\in {\mathscr F}(Y)}{\rm max}{\rm mod}(H:F),
 \end{equation}
 is then called the {\it modality} of the $H$-variety $Y$.

 If $G$ is a (not necessarily connected) algebraic group and $X$ is a $G$-variety, then by definition,
 \begin{equation*}\label{ccomp}
 {\rm mod}(G:X):={\rm mod}(G^0:X),
 \end{equation*}
where $G^0$ is the identity component of $G$.

It readily follows from the definition that if $Z$ is a locally closed $G$-stable subset of $X$, then
\begin{equation}\label{>>>}
{\rm mod}(G:X)\geqslant {\rm mod}(G:Z).
\end{equation}

 Recall that, for every integer $s$, the set $\{x\in X\mid \dim G\cdot x\leqslant s\}$ is closed in $X$.\;Whence, for every locally closed
 (not necessarily $G$-stable) subset $Z$ in $X$,
 \begin{equation}\label{regreg}
 Z^{\rm reg}:=\{z\in Z\mid \dim G\cd z\geqslant \dim G\cd x\;\mbox{for every $x\in Z$}\}
 \end{equation}
is a nonempty
open subset of $Z$.

 The definition of modality implies that \eqref{mod} still holds
 if ${\mathscr F}(Y)$ is re\-placed by the set of all  maximal (with respect to inclusion) families in $Y$, i.e., by the {\it sheets} of $Y$ \cite[Sect.\;6.10]{PV94}.\;Recall
 that there are only finitely many sheets of $Y$. If $Y$ is irreducible,
 then
 $Y^{\rm reg}$ is a sheet, called {\it regular}, which is open and dense in\;$Y$.
 By \eqref{mtd},
 \begin{equation}\label{mtr}
 {\rm mod}(H:Y^{\rm reg})={\rm tr}\,{\rm deg}_k k(Y)^H.
 \end{equation}
Similarly,
\eqref{mod} still holds if  ${\mathscr F}(Y)$ is replaced by the set of all
$H$-stable irredu\-cible locally closed (or closed) subsets of $Y$, and ${\rm mod}(H:F)$ by
${\rm tr\,deg}_kk(F)^H$.

The aforesaid shows that ${\rm mod}(G:X)\!=\!0$ if and only if the set of all $G$-or\-bits in $X$ is finite.

  The existence of regular sheets leads to defining the following distingui\-shed class of algebraic group actions:

  \begin{definition}\label{reg} An irreducible $G$-variety $X$ and the action of $G$ on $X$
  are called {\it modality-regular} if ${\rm mod}(G:X)\!=\!{\rm mod}(G:X^{\rm reg})$.
 \end{definition}

It follows from \eqref{mo}, \eqref{regreg}, and Definition \ref{reg} that an irreducible $G$-variety $X$ is modality-regular if and only if
\begin{equation*}
{\rm mod}(G:X)=\dim X-\max_{x\in X}\dim G\cd x.
\end{equation*}

\section{\bf 4.\;Main result: formulation}

 \begin{theorem}\label{mcri}
  Let $G$ be a connected affine algebraic group.
 The following proper\-ties
 are equivalent:
 \begin{enumerate}[\hskip 2.2mm\rm(i)]
 \item[$({\rm F})$]
 there are only finitely many $\,G$-orbits in every $G$-va\-riety
 contain\-ing
an open $G$-orbit;
  \item[$({\rm M})$]
  every irreducible $G$-variety
  is  modality-regular;
 \item[$({\rm G})$] 
 $G$ is
 either a torus or a direct product of a torus and
a connected one-dimensional unipotent algebraic group.
 \end{enumerate}
 \end{theorem}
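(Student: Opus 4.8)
The plan is to prove the cyclic chain of implications $({\rm G})\Rightarrow({\rm M})\Rightarrow({\rm F})\Rightarrow({\rm G})$, since $({\rm M})\Rightarrow({\rm F})$ is essentially immediate: if $X$ is irreducible with a dense open $G$-orbit, then $X^{\rm reg}$ is that orbit, so ${\rm mod}(G:X^{\rm reg})=0$, and modality-regularity forces ${\rm mod}(G:X)=0$, which by the discussion in Section 3 means finitely many orbits. The real content lies in the remaining two implications, and I would organize the write-up around those, drawing on the auxiliary results promised for Sections 5 and 6.

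For $({\rm F})\Rightarrow({\rm G})$ I would argue by contraposition: if $G$ is not a torus and not a torus times $\mathbf{G}_a$, then $G$ fails $({\rm F})$. First reduce to small cases by climbing up and down the group structure. If $G$ is not solvable it contains a connected semisimple subgroup $S$, and one exhibits an $S$-variety (hence, after inducing up, a $G$-variety) with a dense orbit but infinitely many orbits — e.g. using that semisimple groups have non-spherical flag-type actions, or directly a representation whose projectivization has infinitely many orbits; the cleanest route is probably the regular representation-type argument or the adjoint action on $\mathfrak{g}$ restricted to a suitable closure. If $G$ is solvable but its unipotent radical $R_u(G)$ is not isomorphic to $\mathbf{G}_a$ and not trivial, then $R_u(G)$ either has dimension $\geq 2$ or is $\mathbf{G}_a^d$ with $d\geq 2$ or (in char $0$, which is assumed) is some commutative unipotent group of dimension $\geq 2$; in all these cases one builds a $G$-variety with dense orbit and a one-parameter family of orbits inside the boundary. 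The nonabelian two-dimensional unipotent group, and $\mathbf{G}_a^2$, are the key test cases; for $\mathbf{G}_a^2$ acting on its natural embedding one already sees infinitely many fixed points on the boundary after a suitable completion. One must also handle the case where $G$ has a torus part interacting with a larger unipotent radical, but the torus cannot rescue finiteness since it acts with at most finitely many weights on any given coordinate. I expect the main obstacle here to be assembling a single clean family of counterexamples that covers all the "bad" $G$ uniformly, rather than a case-by-case zoo; the paper's Section 6 auxiliary results on modality (presumably monotonicity under quotients and under passage to subgroups, plus behavior under the operation $G\rightsquigarrow R_u(G)$) should be the tool that makes this uniform.

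For $({\rm G})\Rightarrow({\rm M})$ — the hardest implication — let $X$ be an irreducible $G$-variety with $G$ a torus or a torus times $\mathbf{G}_a$; I must show ${\rm mod}(G:X)={\rm mod}(G:X^{\rm reg})={\rm tr\,deg}_k k(X)^G$ by \eqref{mtr}. Equivalently, no sheet $F\subseteq X$ has modality exceeding ${\rm tr\,deg}_k k(X)^G$. Writing $G=T$ or $G=T\times\mathbf{G}_a$, the strategy is: first dispose of the torus case, where property (F) is classical (every orbit closure of a torus contains finitely many orbits, the toric-embedding fact cited in the introduction) and then upgrade "finitely many orbits in orbit closures" to modality-regularity of arbitrary $X$ via the sheet description of modality — any sheet $F$, being an irreducible $T$-stable subvariety, has $k(F)^T$ of transcendence degree at most that of $k(X)^T$ because a $T$-invariant rational function on $X$ restricts, and one checks the restriction map is generically surjective onto $k(F)^T$ using that $F$ meets a $T$-invariant open chart; the point is that for a torus, invariant rational functions separate typical orbits globally, so the quotient dimension cannot jump downward by passing to a closed invariant subset with the same orbit dimension. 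Then for $G=T\times\mathbf{G}_a$: decompose the action, use that $\mathbf{G}_a$-quotients exist rationally (Rosenlicht), and exploit that a $\mathbf{G}_a$-action adds at most one to orbit dimensions and at most one to quotient dimensions in a controlled way, reducing the modality bound for $G$ on any sheet to the torus bound on the $\mathbf{G}_a$-quotient plus the obvious $\mathbf{G}_a$-contribution, which is exactly matched on $X^{\rm reg}$. This is where the auxiliary material of Section 5 on property (F) — presumably that (F) passes from $T$ to $T\times\mathbf{G}_a$, together with lemmas relating ${\rm mod}(G:X)$ to ${\rm mod}(G/N:X\!\ds\! N)$ for a normal subgroup $N$ — does the heavy lifting, and I would quote those rather than reprove them. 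I anticipate the genuinely delicate step is the upgrade from "(F): finite orbits in orbit closures" to "(M): modality-regularity for all irreducible $X$," because (M) is an a priori stronger statement about all sheets of all varieties, not just orbit closures; the bridge is the sheet-based reformulation of modality from Section 3 together with the observation that the modality of $X$ is realized on some sheet $F$ whose closure $\overline F$ then contains a dense $F$ with all orbits of the same dimension, and \eqref{mtd} plus \eqref{mtr} convert the problem into comparing two transcendence degrees — which one controls by applying (F) to a completion or to $\overline{G\cdot x}$ for generic $x\in F$ to see that, were ${\rm mod}(G:F)>{\rm mod}(G:X^{\rm reg})$, one could produce an orbit closure (inside a suitable $G$-variety built from $F$) carrying a positive-dimensional family of orbits, contradicting (F).
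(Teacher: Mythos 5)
Your global architecture matches the paper's (the same cycle of implications, with (M)$\Rightarrow$(F) dismissed as immediate), and several ingredients you guess at are indeed the ones used: property (F) descends to closed subgroups via homogeneous fiber spaces and to quotients (the paper's Lemma \ref{reduction}), and $\mathbf{G}_a^d$, $d\geqslant 2$, is killed exactly as you suggest, by the projective completion of $\mathbf{A}^d$ whose hyperplane at infinity is pointwise fixed. But there are two genuine gaps. First, in (F)$\Rightarrow$(G) you never actually exclude the noncommutative groups $\mathbf{G}_m\ltimes\mathbf{G}_a$ (one-dimensional unipotent radical, nontrivial torus action); your contrapositive only treats the cases ``unipotent radical of dimension $\geqslant 2$,'' and the remark that ``the torus cannot rescue finiteness'' is not an argument. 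The paper's Proposition \ref{S(n)neq0} supplies the missing counterexample (the representation $(t,u)\cdot(a_1,a_2)=(a_1t^n,a_1u+a_2)$, whose open orbit has the pointwise-fixed line $a_1=0$ in its closure), and this same proposition is what disposes of the semisimple case, since Borel subgroups of $\mathrm{SL}_2$ and $\mathrm{PSL}_2$ are of this form; your treatment of the semisimple case (``non-spherical flag-type actions,'' ``the adjoint action restricted to a suitable closure'') names no specific counterexample. Also, in characteristic $0$ there is no nonabelian two-dimensional unipotent group, so that ``key test case'' is vacuous.

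The more serious gap is in (G)$\Rightarrow$(M) for $G=T\times U$, $U\cong\mathbf{G}_a$. Your bookkeeping — torus bound on the quotient plus a $\mathbf{G}_a$-contribution ``exactly matched on $X^{\rm reg}$'' — fails at precisely the critical point. On $X^{\rm reg}$ the generic stabilizer is finite, so ${\rm mod}(G:X^{\rm reg})=\dim X-\dim T-1$; but on a sheet $C$ that is pointwise fixed by $U$ there is no $\mathbf{G}_a$-contribution to subtract, and the torus bound only gives $\dim C-d_C\leqslant\dim X-\dim T$, which exceeds ${\rm mod}(G:X^{\rm reg})$ by one in the case of equality. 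Ruling out this case of equality is the entire content of the hard implication (and the reason $T\times\mathbf{G}_a$ works while $T\times\mathbf{G}_a^2$ does not); the paper does it by a delicate argument with the categorical quotient of an affine chart by the kernel $T_{i_0}$ of the $T$-action on $C_{i_0}$, showing that a generic $T_{i_0}$-orbit contains a $U$-fixed point of $C_{i_0}$ in its closure, belongs to a finite set of such orbits, and is therefore $U$-stable — contradicting finiteness of generic stabilizers. Nothing in your sketch performs this step. Finally, your proposed ``bridge'' that a sheet of excessive modality would yield an orbit closure with infinitely many orbits (so that (F) would directly imply (M)) is not valid: a positive-dimensional family of orbits says nothing about the orbit structure of the closure of any single one of its orbits, which is exactly why the equivalence of (F) and (M) must pass through the classification (G).
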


 \begin{remarks}\

1. Recall  that every connected one-dimensional unipo\-tent algebraic group is isomorphic to $\mathbf G_a$ (see, e.g., \cite[3.4.9]{S98}).

2.  Let $G$ be a connected affine algebraic group. The following pro\-perties are
 equiva\-lent:
 \begin{enumerate}[\hskip 2mm\rm (i)]
 \item $G$ is a product of a torus and
a connected one-dimensional uni\-po\-tent algebraic group.
 \item $G$ is nilpotent and its
 unipotent radical
 is one-dimensional.
 \end{enumerate}
Therefore, property (G) is equivalent to the property
 \begin{enumerate}[\hskip 4.2mm\rm (i)]
\item[$({\rm G}')$]  $G$ is nilpotent and its unipotent radical is at most one-dimen\-sional.
\end{enumerate}
 \end{remarks}

\section{\bf 5.\;Auxiliary results: property (F)}
 This section contains some auxi\-liary results about property (F) from the
formulation
 of Theorem \ref{mcri} that will be used in its proof.
First we explore  behaviour of this property
under passing to a subgroup and a quotient group. Then we explore it for two-dimensional
connected solv\-able affine algebraic groups, and, in conclusion, for
semisimple affine algebraic groups.

\subsubsection{\bf \boldmath$5.1.$\;Passing to a subgroup and a quotient group}

 \begin{lemma}\label{reduction} Let $G$ be a connected affine algebraic group and let $H$ be its closed
 subgroup.\;If
 $\,G$ enjoys property ${\rm (F)}$, then
 \begin{enumerate}[\hskip 4.2mm \rm(a)]
 \item $H$ enjoys property ${\rm (F)}$;
 \item $G/H$, for normal $H$, enjoys property ${\rm (F)}$.
 \end{enumerate}
 \end{lemma}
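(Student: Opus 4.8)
The plan is to prove both parts by reduction to the defining property of $\mathrm{(F)}$: given an irreducible variety acted on by $H$ (resp. $G/H$) with a dense orbit, cook up from it an irreducible $G$-variety with a dense $G$-orbit, so that finiteness of $G$-orbits in the latter forces finiteness of $H$-orbits (resp. $G/H$-orbits) in the former.

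For part (b), which is the easier one, suppose $H$ is normal and $G/H$ acts on an irreducible variety $Y$ with dense open $(G/H)$-orbit. Pull the action back along the quotient morphism $\pi\colon G\to G/H$ to make $Y$ a $G$-variety; since $\pi$ is surjective, a subset of $Y$ is a $G$-orbit if and only if it is a $(G/H)$-orbit, and the dense $(G/H)$-orbit is a dense open $G$-orbit. By property $\mathrm{(F)}$ for $G$ there are finitely many $G$-orbits in $Y$, hence finitely many $(G/H)$-orbits, which is exactly property $\mathrm{(F)}$ for $G/H$.

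For part (a), let $H$ act on an irreducible variety $Z$ with dense open $H$-orbit $\mathcal{O}_0 = H\cdot z_0$. The natural move is to pass to the associated bundle $X := G\times^H Z$ (the quotient of $G\times Z$ by the free $H$-action $h\cdot(g,z)=(gh^{-1},hz)$), which is an irreducible $G$-variety via left translation on the $G$-factor. One checks that $G$-orbits in $G\times^H Z$ correspond bijectively (via the inclusion $Z\hookrightarrow G\times^H Z$, $z\mapsto [e,z]$) to $H$-orbits in $Z$; in particular the $G$-orbit through $[e,z_0]$ is the image of $G\times \mathcal{O}_0$, which is open and dense in $G\times^H Z$ because $\mathcal{O}_0$ is open and dense in $Z$. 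Hence property $\mathrm{(F)}$ for $G$ yields finitely many $G$-orbits in $G\times^H Z$, i.e. finitely many $H$-orbits in $Z$, proving property $\mathrm{(F)}$ for $H$. The one technical point to verify carefully is that $G\times^H Z$ exists as a variety in the sense used in the paper: this is standard when $G/H$ is quasiprojective (always true in characteristic $0$, or one can invoke \cite{PV94}), so the associated bundle construction is legitimate.

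The main obstacle is the existence and good behaviour of the associated bundle $G\times^H Z$ — specifically, that the quotient of $G\times Z$ by the free $H$-action is again an algebraic variety of the expected kind and that the orbit correspondence is exactly as claimed (including the openness statement, which relies on the morphism $G\times Z\to G\times^H Z$ being open and $G$-equivariant). Once that geometric input is in place, both implications are essentially formal.
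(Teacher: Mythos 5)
Your overall strategy is the same as the paper's: part (b) by pulling the action back along $G\to G/H$, and part (a) via the homogeneous fiber space $G\times^H Z$ together with the bijection between $G$-orbits in $G\times^H Z$ and $H$-orbits in $Z$. Part (b) is fine. But in part (a) you have located the genuine difficulty --- the existence of $G\times^H Z$ as an algebraic variety --- and then resolved it with the wrong hypothesis. You write that the construction is legitimate ``when $G/H$ is quasiprojective''; that condition is automatic (Chevalley) and is not the relevant one. The construction of the associated bundle as a variety (\cite[Thm.\;4.9]{PV94}, going back to Serre) requires the \emph{fiber} $Z$ to be quasi-projective, and $Z$ here is an arbitrary irreducible $H$-variety in the sense of the paper, which need not be quasi-projective. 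As stated, your argument has a gap precisely at the point you yourself flagged as the main obstacle.

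The paper closes this gap by a preliminary reduction: it lifts the action to the normalization $Z^{(\mathrm n)}$ (Seshadri), then invokes Sumihiro's theorem to cover the normal variety by $H$-stable quasi-projective open subsets; since $Z$ is irreducible, each of these contains the dense orbit, and by quasi-compactness one of them still contains infinitely many $H$-orbits. Replacing $Z$ by that open subset makes the fiber quasi-projective, after which $G\times^H Z$ exists and the orbit correspondence gives the contradiction exactly as you describe. With this reduction inserted, your proof of (a) becomes the paper's proof.
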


 \begin{proof}
 (a) Arguing on the contrary, suppose there exists  an irreducible $H$-variety $Y$ with infinitely many $H$-orbits, one of which, say, $\mathcal O$,
  is open in $Y$.\;Since the action canonically lifts to the normalization \cite{Ses63},
we may (and shall) assume that $Y$ is normal.\;Then, by \cite[Lem.\;8]{S74}, we have
 $Y=\bigcup_{i\in I} U_i$, where each $U_i$ is an $H$-stable quasi-projective open subset of $Y$.\;As $Y$ is irreducible, each $U_i$ contains $\mathcal O$.\;Since in the Zarisky topology any open covering contains a finite subcovering,
there is $i_0\in I$ such that $U_{i_0}$
 contains infinitely many $H$-orbits.\;There\-fo\-re
 replacing $Y$ by $U_{i_0}$, we may (and shall) assume that
 $Y$ is quasi-projective.\;Then, by  \cite[3.2]{Ser58} (see also \cite[Thm.\;4.9]{PV94}),
 the homogeneous fiber space $X:=G\times^H Y$ over $G/H$ with the fiber $Y$ is an algebraic variety.\;Since for the action of $H$ on $Y$ there are infinitely many orbits one of which is open,
 the natural action of $G$ on $X$ enjoys these properties as well; see \cite[Thm.\;4.9]{PV94}. This contradicts the condition that $G$ enjoys property (F), thereby proving (a).

(b) Assume,  again arguing on the contrary, that there is an irreducible algebraic $G/H$-variety $X$ with infi\-ni\-tely many $G/H$-orbits, one of which is open in $X$.\;Since the canonical homomorphism $G\to G/H$ determines an action of $G$ on $X$ whose orbits coincide with $G/H$-orbits, this contra\-dicts the condition that $G$ enjoys property (F), thereby pro\-v\-ing\;(b).
 \end{proof}

\subsubsection{\bf \boldmath$5.2.$ \hskip -1mmTwo-dimensional connected solvable  affine algebraic groups}
Let $S$ be a two-dimensional
connected solvable  affine algebraic group. Then
$S=T\ltimes S_u$, where $T$ is  a maximal torus and $S_u$ is the unipotent radical of $S$.\;There are only the following three possibilities (S1), (S2), and (S3) for $S$:

\vskip 1mm

\begin{asparaitem}[\hskip 0mm$\bullet$ ]
\item[$\rm (S1)$] {\it $S_u$ is trivial,
i.e., $S$ is a torus.}

Such $S$ enjoys pro\-per\-ty (F) (see the first example in Introduction).

\vskip 1mm

    \item[$\rm (S2)$] {\it The following equality holds: $\dim T=\dim S_u=1$.}

         In this case, $T$ and $S_u$ are isomorphic to respectively to ${\mathbf G}_m$ and ${\mathbf G}_a$ and
         there is $d\in \mathbf Z$ such that $S$ is isomorphic to the group $S(d):={\mathbf G}_m \ltimes{\mathbf G}_a$, in which the group operation is defined by the formula
        \begin{equation}\label{S(n)}
(t_1, u_1)(t_2, u_2):=(t_1t_2, t_2^du_1+u_2), \quad t_i\in \mathbf G_m, u_i\in \mathbf G_a.
        \end{equation}

 Indeed,
  fix an isomorphism $\theta\colon {\mathbf G}_a\to S_u$.\;For any $t\in T$, the map $S_u\to S_u$, $u\mapsto tut^{-1}$,  is an automorphism, therefore, there is a character
 $\chi\colon T\to {\mathbf  G}_m$ such that      $t\theta(u)t^{-1}=\theta(\chi(t)u)$ for all $u\in {\mathbf G}_a$, $t\in T$; whence the claim.

 The group $S(d)$ is commutative if and only if $d=0$.

 \begin{proposition}\label{S(n)neq0}
  Every group $S(d)$ for $d\neq 0$ does not enjoy property ${\rm(F)}$.
  \end{proposition}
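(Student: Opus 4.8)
The plan is to exhibit, for each $n\neq 0$, an irreducible $S(n)$-variety with a dense open orbit but infinitely many orbits, in the same spirit as the proof of Proposition~\ref{GaGa}: start from the action of $S(n)$ on itself by left translations and equivariantly compactify it so that the boundary acquires a positive-dimensional family of orbits. Concretely, $S(n)$ acts on ${\mathbf A}^2=\{(x,u)\}$, where the second coordinate is the affine line $S_u\cong{\mathbf G}_a$ and the first is the torus $T\cong{\mathbf G}_m$; the open orbit is $\{x\neq 0\}$. The key observation is that the $T$-weight on the ${\mathbf G}_a$-factor is nontrivial precisely because $n\neq 0$, and it is this that will force infinitely many orbits on a suitable boundary.

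First I would compactify the ${\mathbf G}_a$-line to ${\mathbf P}^1$, replacing the $u$-coordinate by homogeneous coordinates $(u_0:u_1)$ with $u=u_1/u_0$, and keep $x\in{\mathbf G}_m$ for the moment. In the chart $u_0=1$ the $S(n)$-action is $(t,a)\cdot(x,u)=(tx,\,ax^{?}+u)$ read off from \eqref{S(n)} (one has to be slightly careful about which side translations act on, but up to replacing $n$ by $-n$ the form is the same); the added point $u_0=0$, i.e. $u=\infty$, gives a copy of ${\mathbf G}_m$ on which $S_u$ acts trivially and $T$ acts by $x\mapsto tx$, so that locus is a single orbit — not yet enough. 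To produce a positive-dimensional boundary family I would instead compactify the \emph{torus} factor as well, or better, blow up or take a toric-type completion adapted to the $T$-weight $n$: pass to ${\mathbf P}^1\times{\mathbf P}^1$ with coordinates $(x_0:x_1)$, $(u_0:u_1)$, and examine the $S(n)$-action there. On the divisor $x_1=0$ (i.e. $x=0$) the translation part of \eqref{S(n)} scales $u$ by $x^n\to 0$ (if $n>0$; use $x=\infty$ if $n<0$), so $S_u$ acts trivially on that divisor while $T$ still acts, fixing the two points $u=0,\infty$ and moving the rest in one orbit — again codimension reasons would need checking. The cleanest route, which I expect to work, is: take the surface obtained from ${\mathbf A}^2$ by the ${\mathbf G}_a$-compactification to ${\mathbf A}^1\times{\mathbf P}^1$ \emph{and} adjoin the torus limit $x\to 0$, i.e. work on ${\mathbf A}^1\times{\mathbf P}^1$ with the $x$-line now including $0$; then the fiber over $x=0$ is a ${\mathbf P}^1$ on which $T$ acts with weights $0$ and $n$ on the two fixed points and weight $n\neq 0$ generically, hence that fiber is acted on nontrivially by $T$ alone and, being one-dimensional with a dense $T$-orbit, is a \emph{family} contributing modality $0$ — so this still does not immediately give infinitely many orbits, and one must go one dimension higher.

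So the step I expect to be the actual obstacle is manufacturing a genuinely positive-dimensional orbit space on the boundary. The way I would resolve it: consider the action of $S(n)$ on ${\mathbf A}^2$ as above and form the product $X:={\mathbf P}^1\times{\mathbf A}^1$ where $S(n)$ acts on the first factor through the homomorphism $S(n)\to S(n)/S_u\cong{\mathbf G}_m=T$ composed with the weight-$n$ action $x\mapsto t^n x$ on ${\mathbf P}^1$, and on the ${\mathbf A}^1$ factor by the translation-and-scaling action $u\mapsto x^n u + a$ — i.e. rearrange the data of \eqref{S(n)} so that the $T$-direction is compactified while the $S_u$-direction stays affine, but now \emph{twist} by an extra invariant. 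More straightforwardly: since $n\neq 0$, the function $f:=$ (the ${\mathbf G}_m$-coordinate raised to the power making it $S_u$-invariant on the open orbit, extended to a boundary hyperplane in a projective completion) will restrict on the boundary to a nonconstant $S(n)$-invariant, and its fibers there are infinitely many distinct orbit closures. Precisely, I would: (1) write down the completion $\overline X$ of $S(n)$ by ${\mathbf P}^2$ or ${\mathbf P}^1\times{\mathbf P}^1$ using the homogenizations of \eqref{S(n)}; (2) identify the boundary $\overline X\setminus S(n)$ and its $S(n)$-action explicitly; (3) locate inside the boundary an $S(n)$-stable locally closed piece on which the $T$-action has a positive-dimensional quotient and $S_u$ acts trivially (this is where $n\neq 0$ is used — when $n=0$ the group is commutative and \eqref{S(n)} degenerates, consistent with $S(0)={\mathbf G}_a\times{\mathbf G}_m$ being torus~$\times\,{\mathbf G}_a$, which \emph{does} have property (F)); (4) conclude ${\rm mod}(S(n):\overline X)>0$ with $\overline X$ containing the dense orbit $S(n)$, contradicting (F). The one delicate point throughout is bookkeeping the character: the weight appearing after homogenization is $\pm n$ or a divisor thereof, and one must check it is still nonzero, which it is since $n\neq 0$.
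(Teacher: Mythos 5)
Your overall strategy -- extend the translation action of $S(n)$ on itself to a partial equivariant completion and find infinitely many orbits on the boundary -- is the right one, and it is essentially how the paper's example can be discovered. But the proposal never actually produces a working variety: the final paragraph is a four-step plan whose crucial step (3), ``locate inside the boundary an $S(n)$-stable piece on which the orbit space is positive-dimensional,'' is left unexecuted, and a claim of the form ``I expect this to work'' is not a proof. Worse, at the one point where you come within reach of a complete argument you miscompute and discard it. Extend the left-translation action $(t,a)\cdot(x,u)=(tx,\;x^{n}a+u)$ (read off from \eqref{S(n)}) from $\mathbf{G}_m\times\mathbf{G}_a$ to $\mathbf{A}^1\times\mathbf{A}^1$. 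For $n>0$, setting $x=0$ gives $(t,a)\cdot(0,u)=(0,u)$: the entire boundary line is \emph{pointwise fixed} -- in particular $T$ does \emph{not} ``move the rest in one orbit'' as you assert, since in the translation picture $T$ acts on the $u$-coordinate only through the term $x^{n}a$, which vanishes both on $T$ (where $a=0$) and on the divisor $x=0$. A line of fixed points is already infinitely many orbits sitting in the closure of the dense orbit, so the proof is finished right there; instead you conclude that two-dimensional completions are insufficient and that ``one must go one dimension higher,'' which sends the rest of the proposal down a blind alley of ever vaguer constructions ($\mathbf{P}^1\times\mathbf{P}^1$, blow-ups, twists by invariants) none of which is carried out.

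For comparison, the paper's proof is a two-line version of exactly this: it takes the representation $S(n)\to\mathrm{GL}_2$, $(t,u)\mapsto\left(\begin{smallmatrix}t^{n}&0\\ u&1\end{smallmatrix}\right)$, so that $S(n)$ acts on $\mathbf{A}^2$ by $g\cdot a=(a_1t^{n},\,a_1u+a_2)$; since $n\neq0$, the fixed-point set is the line $\{a_1=0\}$ and its complement is a single dense open orbit. If you repair the boundary computation above, your argument becomes this one (up to a harmless change of weight on the first coordinate), so the fix is small -- but as written the proposal contains no completed construction and an incorrect computation at the decisive step.
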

  \begin{proof} It follows from \eqref{S(n)} that
  \begin{equation*}
  S(d)\to {\rm GL}_2,\quad (t, u)\mapsto
  \begin{pmatrix}
  t^d& 0\\ u&1
  \end{pmatrix},
  \end{equation*}
  is a representation of $S(d)$.\;It determines the following linear action of $S(d)$ on ${\mathbf A}^{\hskip -.5mm 2}$:
  \begin{equation}\label{action}
  g\cdot a:=(a_1t^d, a_1u+a_2)\;\;\mbox{for $g=(t, u)\in S(d)$,  $a=(a_1, a_2)\in {\mathbf A}^{\hskip -.5mm 2}$.}
  \end{equation}

 From \eqref{action}
  and $d\neq 0$ we immediately infer that the fixed point set of this action is the line $\ell:=\{(a_1, a_2)\in {\mathbf A}^{\hskip -.5mm 2}\mid a_1=0\}$ whose complement ${\mathbf A}^{\hskip -.5mm 2}\setminus \ell$ is a single orbit.\;Thus, the $S(d)$-variety ${\mathbf A}^{\hskip -.5mm 2}$ contains infinitely many orbits one of which is open. This completes the proof.
 \end{proof}

 \item[$\rm (S3)$] {\it $T$ is trivial,
 i.e., $S$ is unipotent.}
\end{asparaitem}

 \begin{proposition}\label{2u} Every  two-dimensional connected unipotent affine
 al\-geb\-raic group $S$ does not enjoy property ${\rm (F)}$.
 \end{proposition}

\begin{proof} First, by \cite[6.3.4]{S98} there exists a one-dimensional connected closed subgroup of $S$ lying in the center of $S$. Since $\dim S=2$, this implies that
there is an exact sequence of group homomorphisms
\begin{equation}\label{exseq}
\begin{gathered}
0\rightarrow\mathbf G_a\xrightarrow{\iota} S\rightarrow\mathbf G_a\rightarrow 0,
\end{gathered}
\end{equation}
such that $\iota(\mathbf G_a)$ is a central subgroup of $S$. Thus $S$ is a central
extension of ${\mathbf G_a}$ by ${\mathbf G_a}$.

We consider two cases,  of which the first is simpler
than the second.

 The consideration  in the first case is based on
the following

\vskip 2mm

\noindent{{\bf Claim.} \it The group ${\mathbf G}_a^d$
does not enjoy property ${\rm(F)}$ for every $d\geqslant 2$.}

\begin{proof}[Proof of Claim]
 The action
 of ${\mathbf G}_a^d$ on itself by translations is the action
 on the
 affine space ${\mathbf A}^{\hskip -.5mm d}$ defined
 by the formula
 \begin{gather*}
 u\cdot a
 :=(a_1+u_1,\ldots, a_d+u_d)\\[-1mm]
 \mbox{for $u=(u_1,\ldots, u_d)\in {\mathbf G}_a^d$,\; $a=(a_1,\ldots, a_d)\in {\mathbf A}^{\hskip -.5mm d}$.}
 \end{gather*}

  We indentify ${\mathbf A}^{\hskip -.5mm d}$ with the affine chart
 \begin{equation}\label{chart}
 {\mathbf P}^d_d:=\{(b_0:\ldots:b_d)\in {\mathbf P}^d\mid b_d\neq 0\}
  \end{equation}
  of the projective space ${\mathbf P}^d$. Then the following formula extends this action
 to the action of  ${\mathbf G}_a^d$
 on ${\mathbf P}^d$:
 \begin{gather*}
 u\cdot b
 :=(b_0+u_1b_d:\ldots: b_{d-1}+u_db_d:b_d)\\[-1mm]
 \mbox{for $u=(u_1,\ldots, u_d)\in {\mathbf G}_a^d$, $b=(b_0:\ldots:b_d)
 \in {\mathbf P}^d$}.
 \end{gather*}
For the latter action, ${\mathbf P}^d_d$
is an open orbit, and the hyperplane ${\mathbf P}^d\setminus
{\mathbf P}^d_d$
is pointwise fixed.\;For $d\geqslant 2$, this hyperplane contains infinitely many points, whence 
the claim.
 \end{proof}

The two cases mentioned above
are that of ${\rm char}\,k=0$  and ${\rm char}\,k\neq 0$.
Note that in the first version of this paper \cite{Pop17_1} only the simpler case of ${\rm char}\,k=0$ has been considered,  as at the time of writing we did not have a proof for ${\rm char}\,k\neq 0$.

\vskip 2mm

\noindent{\it First case: ${\rm char}\,k=0$}.

\vskip 1mm

The fact that in this case $S$ does not enjoy property (F) immediately follows from the above Claim, because ${\rm char}\,k=0$ implies that
$S$ is isomorphic to $\mathbf G_a^2$.
Indeed, using the notation of \eqref{exseq}, take an element in $S\setminus \iota(\mathbf G_a)$. Since ${\rm char}\,k=0$, 
it lies in a
connected closed one-dimensional subgroup $D$ of $S$; see \cite[Rem. in II.7.3]{B91}.
Both $\iota(\mathbf G_a)$ and $D$ are isomorphic to $\mathbf G_a$,
centralize each other, and $\iota(\mathbf G_a)\cap D=e$
because, in view of ${\rm char}\,k=0$,
$\mathbf G_a$ is the closure of any its nontrivial cyclic subgroup.
This implies that $\iota(\mathbf G_a)\times D\to S$, $(g, h)\mapsto gh$ is an embedding of algebraic groups, and therefore, an isomorphism because $\dim S=2$.

\vskip 2mm

\noindent{\it Second case: ${\rm char}\,k=p>0$}.

\vskip 1mm

In this case, it is no longer true that every two-dimensional connected unipotent affine algebraic group (even commutative) is isomorphic to $\mathbf G_a^2$. In \cite{Pop19}, we proved that the two-dimensional Witt group $W_2(p)$ does not enjoy property (F).
Below we give a proof in the general case, developing the idea
used in
 \cite{Pop19}.

Since the underlying variety of $S$ is isomorphic to
$\mathbf A^{\hskip -.5mm 2}$ (see \cite[Prop.\;2]{G58}), we can (and shall)
identify $S$ with  $\mathbf A^{\hskip -.5mm 2}$
endowed with a group operation $\ast$, which is described below.

Let $x_0, x_1\in k[\mathbf A^{\hskip -.5mm 2}]$ be the standard coordinate functions on\;$\mathbf A^{\hskip -.5mm 2}$:
\begin{equation}\label{coo}
    x_i(a)=a_i\quad \mbox{for every $a=(a_0, a_1)\in \mathbf A^{\hskip -.5mm 2}$.}
\end{equation}

The classes of equivalent central extensions of $\mathbf G_a$ by $\mathbf G_a$
are classified by
$H^2(\mathbf G_a, \mathbf G_a)$ (see, e.g., \cite[Chap.\,2]{DFPS08}).
In
\cite[II, \S3, 4.6]{DG70}, it is shown that $H^2(\mathbf G_a, \mathbf G_a)$ is a free
${\rm End}(\mathbf G_a)$-module, having the following family of
$2$-cocycles as a basis modulo the coboundaries:
\begin{equation}\label{basi}
\begin{split}
&\sum_{i=1}^{p-1}c_ix_0^ix_1^{p-i}, \quad
\mbox{where}
\;\; c_i:=
{(p-1)!}/{i!(p-i)!}, \\[-1mm]
&x_0x_1^{p^j},\quad j=1, 2,\ldots,
\end{split}
\end{equation}
(informally, the first polynomial in \eqref{basi}
is $
\big((x_0^p+x_1^p)-(x_0+x_1)^p\big)/p$.) In view of the existence of an exact
sequence \eqref{exseq}, this implies that  there are polynomials $f, h_1,\ldots, h_d\in k[z]$ (where $z$ is a variable over $k$) and positive integers $m_1,\ldots, m_d$ such that,
for all $(a_0, a_1), (b_0, b_1)\in \mathbf A^{\hskip -.5mm 2}$, we have
\begin{equation}\label{G*}
\begin{split}
(a_0, a_1)&* (b_0, b_1)\\[-3mm]
&=\Big(a_0+b_0, a_1+b_1 +f\Big(\sum_{i=1}^{p-1}c_ia_0^ib_0^{p-i}\Big)+
\sum_{j=1}^{d}h_j\big(a_0b_0^{p^{m_j}}\big)\!\Big).
\end{split}
\end{equation}

We consider $k[x_0, x_1]=k[S]$ as an $S$-module with respect to the action of $S$ by left translations (see \cite[I.1.9]{B91}).
It then follows from \eqref{G*}, \eqref{coo}  that for every $s=(s_0, s_1)\in S$,
we have
\begin{align}
s^{-1}\cdot x_0&=x_0+s_0,\label{ltf1}\\[-2.8mm]
s^{-1}\cdot x_1&=x_1+s_1+f\Big(\sum_{i=1}^{p-1}c_is_0^ix_0^{p-i}\Big)+
\sum_{j=1}^{d}h_j\big(s_0x_0^{p^{m_j}}\big).\label{ltf2}
\end{align}

We now fix an integer $n$ such that
\begin{equation}\label{boundn}
{\rm deg}\Big(f\Big(\sum_{i=1}^{p-1}c_ix_0^ix_1^{p-i}\Big)+
\sum_{j=1}^{m}h_j\big(x_0x_1^{p^{m_j}}\big)\!\Big)\leqslant n.%%\in k[x_0]
\end{equation}

It then follows from \eqref{ltf1}, \eqref{ltf2}, and \eqref{boundn}
that the $k$-linear span $V$ of the sequence of functions
\begin{equation}\label{basiss}
1,\; x_0,\; x_0^2,\;\ldots,\; x_0^{n},\; x_1
\end{equation}
is an $(n+2)$-dimensional $S$-stable linear subspace of $k[x_0, x_1]$, and\;if
$$\rho_V\colon S\to {\rm GL}(V)$$
is the linear representation determined by the natural action of $S$ on $V$, then
there are polynomials $q_0, q_1,\ldots, q_{n-1}\in k[z]$ such that
\begin{equation}\label{degq}
{\rm deg}(q_i)\leqslant n\quad\mbox{for all $i$},
\end{equation}
and, for every $s=(s_0, s_1)\in S$,
the
matrix
of
$\rho_V(s^{-1})$ relative to basis \eqref{basiss} has the form

\begin{equation}\label{rhoVg}
\arraycolsep=3.9pt
R_{g^{-1}}:=\begin{pmatrix}
1&s_0&s_0^2&s_0^3&s_0^4&\ldots&s_0^{n}&s_1\\
0& 1& \tbinom{2}{1}
s_0 & \tbinom{3}{2}
s_0^2& \tbinom{4}{3}
s_0^3 &\ldots &\tbinom{n}{n-1}
s_0^{n-1}& q_{n-1}(s_0)\\
0& 0 & 1 & \tbinom{3}{1}
s_0  &\tbinom{4}{2}
s_0^2 &\ldots & \tbinom{n}{n-2}
s_0^{n-2}&q_{n-2}(s_0)\\
0& 0& 0& 1& \tbinom{4}{1}
s_0 & \ldots &
\tbinom{n}{n-3}
s_0^{n-3}& q_{n-3}(s_0)\\
\vdots &\vdots &\vdots &\vdots &\vdots &\ddots &\vdots &\vdots\\
0&0&0&0&0&1& \tbinom{n}{1}
s_0&q_1(s_0)\\
0&0&0&0&0&0&1&q_0(s_0)\\
0&0&0&0&0&0&0&1
\end{pmatrix}.
\end{equation}

\vskip 2mm

Next, we fix a positive integer $r$.
It follows from \eqref{ltf1} and ${\rm char} \,k=p$
that 
the $k$-linear span $U$ of the sequence of functions
\begin{equation}\label{2space}
1,\; x_0^{p^r}
\end{equation}
is a two-dimensional $S$-stable linear subspace of $k[x_0, x_1]$, and if $\rho_U\colon S\to {\rm GL}(U)$ is the linear representation determined by the natural action of $S$ on $U$, then, for every $s=(s_0, s_1)\in S$, the matrix of $\rho_U(s^{-1})$
relative to basis \eqref{2space} has the form
\begin{equation}\label{rhoUg}
P_{s^{-1}}:=\begin{pmatrix}
1&s_0^{p^r}\\
0&1
\end{pmatrix}.
\end{equation}

We now identify $V\oplus U$ with the affine space
${\mathbf A}^{n+4}$ by means of the bijection
\begin{gather*}
V\oplus U\to {\mathbf A}^{n+4},\\[-3.5mm]
\Big(\displaystyle\sum_{i=0}^{n}a_ix_0^i
+a_{n+1}x_1, a_{n+2}+a_{n+3}x_0^{p^r}\Big)
\mapsto  (a_0, a_1,\ldots, a_{n+3}),
\end{gather*}
and, in turn, identify ${\mathbf A}^{n+4}$ with the standard affine chart
\begin{equation*}
{\bf P}^{n+4}_{n+4}:=\{(b_0:b_1:\ldots :b_{n+3})
\mid
b_{n+3}\neq 0\}
\end{equation*}
of the projective space ${\bf P}^{n+4}$ by means of the bijection
\begin{equation*}
{\mathbf A}^{n+4}\to {\bf P}^{n+4}_{n+4},\quad
(a_0, a_1,\ldots, a_{n+3})\mapsto (a_0: a_1:\ldots: a_{n+3}:1).
\end{equation*}

The linear action of $S$ on ${\bf A}^{n+4}$ determined by
$\rho_V\oplus \rho_U$
 extends to the following action of $S$ on
${\bf P}^{n+4}$: if
$s\in S$ and
\begin{equation}\label{WrhoQ}
 Q_{s^{-1}}:=\begin{pmatrix}
 R_{s^{-1}}& 0\\
 0& P_{s^{-1}}
 \end{pmatrix}\!,
\end{equation}
then
\begin{equation}\label{aPP}
\begin{gathered}
s^{-1}\cdot (b_0:b_1:\ldots :b_{n+4}):=(b'_0:b'_1:\ldots :b'_{n+4}),\;
\mbox{where}\\[-.7mm]
(b'_0:b'_1:\ldots :b'_{n+3})^{\sf T}=Q_{s^{-1}}(b_0:b_1:\ldots :b_{n+3})^{\sf T},\;\; b'_{n+4}=b_{n+4}.
\end{gathered}
\end{equation}

It follows from \eqref{rhoVg}, \eqref{rhoUg}, \eqref{WrhoQ},
\eqref{aPP} that the
line
\begin{equation}\label{ell}
    \ell:=\big\{(b_0:b_1:\ldots :b_{n+4})\in {\bf P}^{n+4}\mid
    \mbox{$b_i=0$ for every $i\neq 0, n+2$}\big\}
\end{equation}
is pointwise fixed with respect to this action of $S$ on
${\bf P}^{n+4}$:
\begin{equation}\label{fff}
\ell\subseteq ({\bf P}^{n+4})^{S}.
\end{equation}

We shall now show that if we take $r$ such that
\begin{equation}\label{r>}
p^r>n,
\end{equation}
then  there exists an $S$-orbit in ${\mathbf P}^{n+4}$, whose closure contains the line $\ell$;
this and \eqref{fff} will then complete the proof.

Namely, assume that \eqref{r>} holds
and consider the $S$-orbit of the point
\begin{equation*}
    v:=(1:1:\ldots: 1)\in {\bf P}^{n+4}.
\end{equation*}
It follows from \eqref{rhoVg}, \eqref{rhoUg}, \eqref{WrhoQ},
\eqref{aPP} that this orbit
$S\cdot v$
is the image of the morphism
\begin{equation}\label{varphi}
\begin{gathered}
\varphi\colon {\bf A}^2\to {\bf P}^{n+4},\quad
s\mapsto
(f_0(s
)\!:\!f_1(s
)\!:\!\ldots \!:\!f_{n+3}(s
)\!:\!1),
\end{gathered}
\end{equation}
where
\begin{equation}\label{Worbit}
\left.
\begin{split}
f_0
&=1+
x_0+
x_0^2+
\cdots+
x_0^{n}+
x_1,\\
f_1
&=1+\tbinom{2}{1}
x_0+\tbinom{3}{2}
x_0^2+
\cdots+\tbinom{n}{n-1}
x_0^{n-1}+q_{n-1}(
x_0),\\
f_2
&=1+\tbinom{3}{1}
x_0+\tbinom{4}{2}
x_0^2+\cdots+\tbinom{n}{n-2}
x_0^{n-2}+
q_{n-2}(
x_0),\\
f_3
&=1+\tbinom{4}{1}
x_0+\tbinom{5}{2}
x_0^2+\cdots+\tbinom{n}{n-3}
x_0^{n-3}
+q_{n-3}(
x_0),\\
\;.\;.&\;.\;.\;.\;.\;.\;.\;.\;.\;.\;.\;.\;.\;.\;.\;.\;.\;.\;.\;.\;.\;.\;.\;.\;.\;.\;.\;.\;.\;.\;.\;.\;.\;.\;.\;.\;.\;.\;.\;.\;\\
f_{n-1}
&=1+\tbinom{n}{1}
x_0+q_1(
x_0),\\
f_{n}
&=1+q_0(
x_0),\\
f_{n+1}
&=1,\\
f_{n+2}
&=1+
x_0^{p^r},\\
f_{n+3}
&=1.
\end{split}
\right\}
\end{equation}

Let ${\bf A}^1_*:={\bf A}^1\setminus \{0\}$. We assign to every $a\in k$
the morphism
\begin{equation}\label{psia}
    \psi_a\colon {\bf A}^1_*\to {\bf A}^2,\;\;t\mapsto (t^{-1}, at^{-p^r}).
\end{equation}
Formulas \eqref{varphi}, \eqref{Worbit}, \eqref{psia} show that $(\varphi\circ \psi_a)(t)$
for every $t\in {\bf A}^1_*$ is the point
\begin{equation*}\label{psiphi}
\begin{split}
   \big(f_0(t^{-1}\!, at^{-p^r})\!:\!f_1(t^{-1}\!, at^{-p^r})\!:\!\ldots \!:\!
   f_{n}(t^{-1}\!, at^{-p^r})\!:\!1\!:\!1+t^{-p^r}\!\!:\!1\!:\!1\big)\in {\mathbf P^{n+4}}.
   \end{split}
\end{equation*}
Since
$t\neq 0$, this point
coincides with
\begin{equation}\label{psiphicoo}
\big(t^{p^r}\hskip -1.2mm f_0(t^{-1}, at^{-p^r}\!)\!:\!t^{p^r}\hskip -1.2mmf_1(t^{-1}, at^{-p^r}\!)\!:\!\ldots \!:\!t^{p^r}\hskip -1.2mm f_{n}(t^{-1}, at^{-p^r}\!)
\!:\!
1+t^{p^r}\hskip -1.2mm\!:\!t^{p^r}\hskip -1.2mm\!:\!t^{p^r}\big).
\end{equation}

Denote by $k[z]_+$ the $k$-linear span of $\{z^d \mid d>0\}$ in $k[z]$. From \eqref{Worbit}, \eqref{r>}, and \eqref{degq} we infer the existence of
polynomials $u_0, u_1, \ldots, u_n\in k[z]_+$ such that
\begin{equation}\label{coocase}
t^{p^r}f_i(t^{-1}, at^{-p^r})=\begin{cases}
a+u_0(t)&\mbox{for $i=0$},\\
u_i(t)&\mbox{for $i=1,\ldots, n$.}
\end{cases}
\end{equation}

From \eqref{psiphicoo} and \eqref{coocase} we conclude that the morphism
\begin{equation*}
    \varphi\circ\psi_a\colon {\bf A}^1_*\to S\cdot v\subseteq {\mathbf P}^{n+4}
\end{equation*}
uniquely extends up to a morphism
$$
\varepsilon_a\colon {\bf A}^1\to {\bf P}^{p+4}
$$
such that
$$
\varepsilon_a(0)=(a:0:0:\ldots:0:1:0:0).
$$

By \eqref{ell} we have $\varepsilon_a(0)\in \ell$. Since $a$ is an arbitrary element of $k$, we infer that the closure of $S\cdot v$ in $\mathbf P^{n+4}$ contains a nonempty subset of $\ell$; hence it contains the whole $\ell$ as well.
This completes the proof of  Pro\-po\-si\-tion\;\ref{2u}.
\end{proof}

\subsubsection{\bf \boldmath
$5.3.$\;Semisimple  affine algebraic groups}

  \begin{proposition}\label{ss} Every nontrivial connected semisimple algebraic group $\,G$ does not enjoy property ${\rm(F)}$.
 \end{proposition}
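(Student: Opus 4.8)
The plan is to exhibit, for any nontrivial connected semisimple $G$, a single irreducible $G$-variety containing a dense open orbit but with infinitely many orbits, thereby violating property $(\mathrm F)$. The most economical source of such examples is a projective space on which $G$ acts linearly with a dense orbit but whose boundary decomposes into infinitely many orbits. Since $G$ is semisimple and nontrivial, it contains a closed subgroup isomorphic to $\mathrm{SL}_2$ or $\mathrm{PGL}_2$; by Lemma \ref{reduction}(a) it suffices to treat the case $G=\mathrm{SL}_2$ (the $\mathrm{PGL}_2$ case being handled identically, or reduced to $\mathrm{SL}_2$ via the adjoint action). So I would first reduce to $G=\mathrm{SL}_2$.

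Next I would write down an explicit linear representation of $\mathrm{SL}_2$ whose projectivization has the required properties. A clean choice is $V=k^2\oplus S^m(k^2)$ for suitable $m$, or more simply $V=k^2\oplus k^2$ (two copies of the standard representation), and consider $\mathbf P(V)=\mathbf P^3$. One checks that a generic point $[(v,w)]$ with $v,w$ linearly independent lies in an orbit whose dimension equals $\dim\mathrm{SL}_2=3$, which together with $\dim\mathbf P^3=3$ and irreducibility shows this orbit is dense and open; meanwhile the locus where $v$ and $w$ are proportional is a closed $\mathrm{SL}_2$-stable subset isomorphic (as a variety with $\mathrm{SL}_2$-action) to $\mathbf P^1\times\mathbf P^1$ fibered over $\mathbf P^1=\mathbf P(k^2)$, on which $\mathrm{SL}_2$ acts with the $\mathbf P^1$-fiber of scaling parameters being acted on trivially in one direction. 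That fiber direction gives a one-parameter family of distinct orbits, hence infinitely many orbits in $\mathbf P^3$. (If one prefers to avoid the reducible representation $k^2\oplus k^2$, the adjoint representation $\mathfrak{sl}_2$ works too: $\mathbf P(\mathfrak{sl}_2)=\mathbf P^2$ has the set of $[x]$ with $x$ nilpotent as a dense orbit only after removing a conic, but the semisimple elements modulo scaling give a $\mathbf P^1$ of orbits — so here one must be slightly more careful about which orbit is dense, which is why the $k^2\oplus k^2$ model is the more robust choice.)

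The key steps, in order, are: (1) reduce via Lemma \ref{reduction}(a) to $G=\mathrm{SL}_2$ by embedding a copy of $\mathrm{SL}_2$ or $\mathrm{PGL}_2$ as a closed subgroup — any nontrivial semisimple group contains one because its Lie algebra, being semisimple and nonzero, contains an $\mathfrak{sl}_2$-triple; (2) exhibit the explicit action of $\mathrm{SL}_2$ on $\mathbf P^3$ coming from $k^2\oplus k^2$; (3) verify by a dimension count that the open subset of pairs spanning $k^2$ is a single $3$-dimensional orbit, hence dense and open in $\mathbf P^3$; (4) analyze the complement and exhibit inside it an infinite family of orbits, e.g. the orbits of $[(v,\lambda v)]$ for $\lambda\in k$ are pairwise distinct as $\lambda$ varies over a set of positive transcendence degree. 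I expect the main obstacle to be step (4): one must argue that the boundary genuinely contains infinitely many orbits rather than finitely many of positive dimension, which amounts to identifying the $\mathrm{SL}_2$-action on the locus $\{[(v,w)]: v\parallel w\}$ with a product in which $\mathrm{SL}_2$ acts transitively on one $\mathbf P^1$ factor and trivially on the other, so that the invariant "ratio" coordinate distinguishes orbits. Once that identification is made the conclusion is immediate, and the proof contradicts $(\mathrm F)$ for $\mathrm{SL}_2$, hence for $G$.
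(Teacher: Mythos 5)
Your proposal is correct, but it diverges from the paper's proof after the first reduction. Both arguments begin by producing a closed rank-one subgroup isomorphic to ${\rm SL}_2$ or ${\rm PGL}_2$ and invoking Lemma \ref{reduction}(a); the paper does this via the root construction $(G_\alpha,G_\alpha)$ for $G_\alpha$ the centralizer of $({\rm ker}\,\alpha)^0$, which is the precise form of your ``$\mathfrak{sl}_2$-triple'' remark (and works without appeal to Lie-algebra integration). The divergence is in what one does with that subgroup: the paper descends \emph{once more}, to a Borel subgroup of $(G_\alpha,G_\alpha)$, identifies it with $S(1)$ or $S(2)$, and quotes Proposition \ref{S(n)neq0}, whose counterexample is the two-dimensional linear action \eqref{action} on ${\mathbf A}^2$ with a line of fixed points and dense complementary orbit. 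You instead stay with the full group ${\rm SL}_2$ and build a projective counterexample: ${\mathbf P}(k^2\oplus k^2)={\mathbf P}^3$ with dense open orbit the classes of linearly independent pairs (a single orbit since $k$ is algebraically closed, so the determinant can be rescaled by a square) and boundary the Segre quadric ${\mathbf P}^1\times{\mathbf P}^1$, on which ${\rm SL}_2$ acts transitively on one factor and trivially on the ``ratio'' factor, yielding a ${\mathbf P}^1$ of one-dimensional orbits; since $-I$ acts by a scalar on $V$, this descends to ${\rm PGL}_2$, so both isogeny types are covered by one example. Both arguments are sound. What the paper's route buys is economy: Proposition \ref{S(n)neq0} is needed anyway in step 2 of the proof of Theorem \ref{mcri} (for the subgroup $T'U$), so reusing it here costs nothing, and its counterexample is a two-dimensional affine computation. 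What your route buys is that the counterexample lives on the semisimple group itself rather than on a solvable subgroup, which is perhaps more transparent geometrically, at the price of an extra explicit verification; your own parenthetical correctly flags that the superficially more natural choice ${\mathbf P}(\mathfrak{sl}_2)$ would \emph{not} work (it has only two orbits), so the choice of representation genuinely matters and must be checked as you do in steps (3)--(4).
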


 \begin{proof} Let $\alpha$ be a root of $G$ with respect to a maximal torus and let  $G_\alpha$ be the centralizer of the torus $({\rm ker}\,\alpha)^0$ in $G$.\;The commutator group
 $(G_\alpha, G_\alpha)$ is isomorphic to either ${\rm SL}_2$ or ${\rm PSL}_2$ (see, e.g., \cite[7.1.2, 8.1.4]{S98}).\;Correspon\-dingly, the Borel subgroups of $(G_\alpha, G_\alpha)$ are isomorphic to either $S(1)$ or
 $S(2)$ (see Subsection 5.2(S2) above).\;Hence, by Pro\-po\-sition \ref{S(n)neq0}, they do not enjoy  pro\-perty ${\rm(F)}$.\;The claim then follows from Lemma \ref{reduction}(a).
 \end{proof}

\section{\bf 6.\;Auxiliary results: modality} The following lemma helps to prac\-ti\-cally compute
the modality and will be used in
the proof of Theorem\;\ref{mcri}.

\begin{lemma}\label{SI} Let $G$ be an algebraic group, let $X$
be a $G$-variety,
and let
 $\{C_i\}_{i\in I}$ be a collection of  the subsets of $X$ such that
\begin{enumerate}[\hskip 2.0mm\rm(i)]
\item $I$ is finite;
\item $\bigcup_{i\in I}C_i=X$;
\item the closure $\overline{C_i}$ of $\,C_i$ in $X$ is irreducible for every $i\in I$;
\item every $C_i$ is $G$-stable;
\item all $G$-orbits in $C_i$ have the same dimension $d_i$ for every $i\in I$.
\end{enumerate}
Then the following hold:
\begin{enumerate}[\hskip 4.2mm\rm(a)]
\item ${\rm mod}(G:X)=
{\rm max}_{i\in I}\big(\!\dim \overline{C_i}-d_i\big)$;
    \item if $X$ is irreducible, then $X\!=\!\overline{C_{i_0}}$ for some $i_0$,
    and ${\rm mod}(G:X^{\rm reg})\!=\!\dim X\!-\!d_{i_0}$.
\end{enumerate}
\end{lemma}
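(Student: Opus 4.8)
\textbf{Proof proposal for Lemma \ref{SI}.}

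The plan is to reduce everything to the definition of modality via the formula \eqref{mod}, together with the observation that on a $G$-stable irreducible locally closed set whose $G$-orbits all have one dimension, the modality is the dimension minus that orbit dimension.

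For part (a), I would argue both inequalities. For ``$\geqslant$'', take any $i\in I$ and pass to $C_i^{\rm irr}$, a dense open (hence irreducible, $G$-stable, locally closed) subset of $C_i$ lying in $\overline{C_i}$ on which all $G$-orbits still have dimension $d_i$; this is a family with modality $\dim\overline{C_i}-d_i$, so ${\rm mod}(G:X)\geqslant\dim\overline{C_i}-d_i$ for every $i$, giving ${\rm mod}(G:X)\geqslant{\rm max}_{i}(\dim\overline{C_i}-d_i)$. For ``$\leqslant$'', let $F\in{\mathscr F}(X)$ be an arbitrary family, so $F$ is irreducible, $G$-stable, locally closed, with all orbits of some fixed dimension $d$, and ${\rm mod}(G:F)=\dim F-d$. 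Since $\bigcup_{i}C_i=X\supseteq F$ and $I$ is finite, some $F\cap C_i$ is dense in $F$; then $\overline{F}\subseteq\overline{C_i}$, so $\dim F\leqslant\dim\overline{C_i}$. Moreover a dense subset of $F$ lies in $C_i$, where every $G$-orbit has dimension $d_i$; since the generic orbit dimension on the irreducible set $F$ equals $d$, we get $d=d_i$. Hence $\dim F-d\leqslant\dim\overline{C_i}-d_i\leqslant{\rm max}_{j}(\dim\overline{C_j}-d_j)$. Taking the maximum over $F\in{\mathscr F}(X)$ and invoking \eqref{mod} yields the reverse inequality, proving (a).

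For part (b), assume $X$ irreducible. Since $X=\bigcup_{i}\overline{C_i}$ is a finite union of irreducible closed sets and $X$ is irreducible, $X=\overline{C_{i_0}}$ for some $i_0$; then $C_{i_0}$ is dense in $X$. The regular sheet $X^{\rm reg}$ of \eqref{regreg} is open dense in $X$, so $X^{\rm reg}\cap C_{i_0}$ is nonempty, hence open dense in $X$; on this set $G$-orbits have dimension $d_{i_0}$ (they lie in $C_{i_0}$) and simultaneously the maximal orbit dimension on $X$ (they lie in $X^{\rm reg}$). Thus the generic orbit dimension on $X$ is $d_{i_0}$, and by \eqref{mtr} (or directly, since $X^{\rm reg}$ is itself a family) ${\rm mod}(G:X^{\rm reg})=\dim X^{\rm reg}-d_{i_0}=\dim X-d_{i_0}$, as claimed.

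The only mild subtlety — and the step I would be most careful about — is the matching of orbit dimensions, namely that a family $F$ which is generically contained in some $C_i$ must have its orbit dimension $d$ equal to $d_i$. This is where irreducibility of $F$ (so that it has a well-defined generic orbit dimension, via the closedness of $\{x\mid\dim G\cd x\leqslant s\}$) and density of $F\cap C_i$ in $F$ are both used; everything else is bookkeeping with \eqref{mod}, the finiteness of $I$, and the fact that a finite cover of an irreducible space has a member dense in it.
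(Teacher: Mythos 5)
Your proof is correct and follows essentially the same route as the paper's: the lower bound comes from the families $\overline{C_i}^{\rm reg}\supseteq C_i$ of modality $\dim\overline{C_i}-d_i$, the upper bound from using irreducibility of a family $F$ and finiteness of $I$ to trap $F$ inside some $\overline{C_i}$ with $F\cap C_i\neq\varnothing$ (forcing $d=d_i$), and part (b) from $X=\overline{C_{i_0}}$. One cosmetic remark: since $C_i$ need not be locally closed, the family realizing the lower bound is most cleanly taken to be $\overline{C_i}^{\rm reg}$ (open in the closure and containing $C_i$ by (v)) rather than an ``open subset of $C_i$''; likewise in (b) you only need $X^{\rm reg}\cap C_{i_0}\neq\varnothing$, not that this intersection be open.
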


\begin{proof}
By (iii), we have a family ${\overline {C_i}}^{\rm reg}$, and (v) implies
$C_i\subseteq {\overline {C_i}}^{\rm reg}$.
Whence
\begin{equation}\label{Cm}
{\rm mod}(G:\overline {C_i}^{\rm reg})=\dim \overline {C_i}-d_i.
\end{equation}
From \eqref{>>>}
and \eqref{Cm}, we infer that
${\rm mod}(G:X)\geqslant \underset{i\in I}{\max} (\dim {\overline C}_i-d_i)$.

To prove the op\-po\-site inequality let $Z\in {\mathscr F}(X)$ be a family of $s$-dimensional $G$-orbits such that ${\rm mod}(G:X)=\dim Z-s$
and let
$J:=\{i\in I \mid Z\cap C_i\neq \varnothing\}.$
 By (ii), we have $Z=\bigcup_{j\in J}(Z\cap {\overline {C_j}})$.\;Since $Z$ is irreducible and,
  by (i),  $J$ is finite, there is $j_0\in J$ such
 that $Z\subseteq {\overline {C_{j_0}}}$.\;As $Z\cap C_{j_0}\neq \varnothing$,
 we have $s=d_{j_0}$.\;Therefore,
 ${\rm mod}(G:X)=\dim Z-s\leqslant \dim
 \overline {C_{j_0}}-d_{j_0}$.\;This
 proves (a).

By (ii), $\bigcup_{i\in I} \overline{C_i}=X$.\;If $X$ is irreducible, then, in view of (i), this equality implies the existence of $i_0$ such that $X=\overline{C_{i_0}}$.\;This and \eqref{Cm}
prove (b).
\end{proof}

\begin{lemma}\label{moma} Let $G$ be a connected algebraic group, let $X$ and $Y$ be the irreducible $G$-varieties,  and let
$\varphi\colon X\dashrightarrow Y$ be a  rational $G$-equivariant
map.
\begin{enumerate}[\hskip 2mm\rm(i)]
\item If $\varphi$ is dominant, then
${\rm mod}(G:X^{\rm reg})\geqslant {\rm mod}(G:Y^{\rm reg}).$\;If, more\-over,
$\dim X=\dim Y$, then ${\rm mod}(G:X^{\rm reg})= {\rm mod}(G:Y^{\rm reg}).$
\item If $\varphi$  is a surjective morphism, then
${\rm mod}(G:X)\geqslant {\rm mod}(G:Y).$
\end{enumerate}
\end{lemma}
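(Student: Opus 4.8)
The plan is to reduce everything to the transcendence-degree description of modality of regular sheets, formula \eqref{mtr}, together with the additive behaviour of transcendence degree along dominant maps. For part (i), first I would replace $\varphi$ by a genuine morphism on a suitable $G$-stable open subset: since $X^{\rm reg}$ and $Y^{\rm reg}$ are $G$-stable, irreducible, open and dense in $X$ and $Y$ respectively, and $\varphi$ is dominant and $G$-equivariant, there is a $G$-stable dense open $U\subseteq X$ on which $\varphi$ is defined and $\varphi(U)$ is dense in $Y$; shrinking, we may take $U\subseteq X^{\rm reg}$ and $\varphi(U)\subseteq Y^{\rm reg}$, and then $k(X)=k(U)$, $k(Y)=k(\varphi(U))$, $k(X)^G=k(U)^G$, $k(Y)^G=k(\varphi(U))^G$. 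Now $\varphi^*$ embeds $k(Y)$ into $k(X)$ and carries $k(Y)^G$ into $k(X)^G$, so by \eqref{mtr},
\[
{\rm mod}(G:X^{\rm reg})={\rm tr\,deg}_k k(X)^G\geqslant {\rm tr\,deg}_k k(Y)^G={\rm mod}(G:Y^{\rm reg}).
\]
For the equality statement when $\dim X=\dim Y$: a dominant rational map between irreducible varieties of equal dimension is generically finite, so $k(X)/\varphi^*k(Y)$ is an algebraic (finite) extension; consequently $k(X)^G/\varphi^*(k(Y)^G)$ is algebraic as well, whence the two transcendence degrees coincide and equality holds in the display above.

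For part (ii), the idea is to transport a witnessing family from $Y$ up to $X$ via $\varphi$. Let $Z\in{\mathscr F}(Y)$ be a family of $s$-dimensional $G$-orbits realizing ${\rm mod}(G:Y)=\dim Z-s$; its closure $\overline{Z}$ in $Y$ is an irreducible $G$-stable closed subset with ${\rm tr\,deg}_k k(\overline Z)^G=\dim Z-s$ by the last sentence of Section 3. Set $W:=\varphi^{-1}(\overline Z)$, a closed $G$-stable subset of $X$ (using that $\varphi$ is a surjective $G$-morphism), and let $W_0$ be an irreducible component of $W$ that dominates $\overline Z$ under $\varphi$; such a component exists because $\varphi|_W\colon W\to\overline Z$ is surjective. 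Then $\varphi$ restricts to a dominant $G$-morphism $W_0\to\overline Z$, so by the transcendence-degree monotonicity used in part (i),
\[
{\rm mod}(G:X)\geqslant{\rm tr\,deg}_k k(W_0)^G\geqslant{\rm tr\,deg}_k k(\overline Z)^G=\dim Z-s={\rm mod}(G:Y),
\]
where the first inequality is the displayed refinement of \eqref{mod} (families may be replaced by $G$-stable irreducible closed subsets, with modality read as transcendence degree of invariants).

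The main obstacle I anticipate is the bookkeeping in part (ii): ensuring that a component $W_0$ of $\varphi^{-1}(\overline Z)$ both is $G$-stable (which follows since $G$ is connected, hence permutes the components trivially as each is $G$-stable) and genuinely dominates $\overline Z$, rather than landing in a proper closed subset. Surjectivity of $\varphi$ guarantees $\varphi(W)=\overline Z$, and since $\overline Z$ is irreducible while $W$ has finitely many components, at least one component surjects onto a dense subset of $\overline Z$; that is the component to take. A secondary point worth stating carefully in part (i) is why shrinking $X^{\rm reg}$ to a $G$-stable open set on which $\varphi$ is a morphism does not change $k(X)^G$ — this is immediate because a dense open subset of an irreducible variety has the same function field, and $G$-invariance is a field-theoretic condition — but it is the kind of step that should be spelled out. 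Everything else is a direct application of \eqref{mtr} and the additivity of transcendence degree.
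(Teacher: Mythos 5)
Your proof is correct and, for the inequality in (i) and for all of (ii), follows essentially the same route as the paper: the inequality in (i) is exactly the paper's appeal to \eqref{mtr} via the $G$-equivariant embedding $\varphi^*\colon k(Y)\hookrightarrow k(X)$, and your part (ii) pulls a maximizing family back along $\varphi$, picks a dominating irreducible component (automatically $G$-stable by connectedness of $G$), and applies (i), which is precisely the paper's argument with $\overline Z$ in place of the family $F$ itself. The one genuine divergence is the equality case in (i): the paper uses the fiber dimension theorem to conclude that $\dim G\cd x=\dim G\cd\varphi(x)$ generically, hence that the maximal orbit dimensions in $X$ and $Y$ agree, and then reads off equality from \eqref{mo}; you instead argue that $k(X)$ is algebraic over $\varphi^*k(Y)$ and deduce that $k(X)^G$ is algebraic over $\varphi^*(k(Y)^G)$. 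That deduction is true but not automatic --- an invariant element of $k(X)$ is a priori only algebraic over $\varphi^*k(Y)$, not over the invariant subfield --- and needs the one-line observation that the minimal polynomial of a $G$-fixed element over the $G$-stable field $\varphi^*k(Y)$ is unique, hence has $G$-invariant coefficients. With that sentence added your field-theoretic variant is fully rigorous; the paper's orbit-dimension argument sidesteps the issue entirely, which is the only advantage it buys.
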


\begin{proof} The inequality in (i) follows from \eqref{mtr} because $\varphi$  determines
a $G$-equi\-variant field embedding  $\varphi^*\colon k(Y)\hookrightarrow k(X)$.

Assume that
$\dim X=\dim Y$. Then, by the fiber dimension theorem, the fibers of $\varphi$ over the points of an open subset of $Y$ are finite.\;Whence, for every point $x$ of an open subset of $X$, the equality
$\dim \,G\cd x=\dim \,G\cd \varphi(x)$ holds. This implies the following equality:
\begin{equation}\label{mXmY}
m_X:=\max_{x\in X}\dim G\cdot x=m_Y:= \max_{y\in Y}\dim G\cdot y.
\end{equation}
From \eqref{mXmY} and \eqref{mo} we obtain
 ${\rm mod}(G:X^{\rm reg})=\dim X-m_X=\dim Y-m_Y={\rm mod}(G:Y^{\rm reg}).$
This proves (i).

To prove (ii), consider a family $F$ in $Y$ such that
\begin{equation}\label{FY}
{\rm mod}(G:Y)={\rm mod}(G:F).
\end{equation}

If $\varphi$ is a surjective morphism, then $\varphi\colon \varphi^{-1}(F)\to F$ is a surjective
morphism. As $F$ is irreducible, there is an irreducible component $\widetilde F$ of $\varphi^{-1}(F)$ such that $\varphi\colon \widetilde F\to F$ is a surjective morphism.\;Since $\varphi$ is $G$-equivariant and $G$ is connected, $\widetilde F$ is $G$-stable, so the latter morphism is $G$-equivariant.\;Hence
\begin{equation}
\label{>=}
\begin{split}
{\rm mod}(G:X)
&\geqslant {\rm mod}(G:{\widetilde F}^{\rm reg})
\geqslant
{\rm mod}(G:F^{\rm reg})\\
&={\rm mod}(G:F)
={\rm mod}(G:Y)
\end{split}
\end{equation}
(in \eqref{>=}, the first inequality follows from \eqref{mod}, and the second from (i); the first equality follows from $F=F^{\rm reg}$,
and the second from \eqref{FY})
This proves (ii).
\end{proof}

Recall \cite{Ses63} that
if $G$ is an algebraic group, $X$ is an irreducible $G$-variety, and $\nu\colon \widetilde X
\to X$ is its normalization, then the action of $G$ on $X$
lifts to $\widetilde X
$ so that
$\nu$ becomes $G$-equivariant.

\begin{corollary}\label{normal} In the above notation,

\begin{enumerate}[\hskip 2mm \rm (i)]
\item
${\rm mod}(G: \widetilde X)
={\rm mod}(G:X);$
\item 
$X$ is modality-regular if
and only if
$\widetilde X
$\;is.
\end{enumerate}
\end{corollary}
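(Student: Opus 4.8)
The statement to prove is Corollary~\ref{normal}: that the normalization map $X^{(\mathrm n)}\to X$ preserves both the modality and the modality-regularity of a $G$-action on an irreducible variety $X$.

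\medskip

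The plan is to deduce both assertions directly from Lemma~\ref{moma}(i) applied to the normalization morphism $\nu\colon X^{(\mathrm n)}\to X$, which by \cite{Ses63} is $G$-equivariant, birational, dominant, and in particular satisfies $\dim X^{(\mathrm n)}=\dim X$. For part~(i), I would first apply Lemma~\ref{moma}(i) to $\nu$: since $\nu$ is dominant with $\dim X^{(\mathrm n)}=\dim X$, the ``moreover'' clause gives ${\rm mod}(G:(X^{(\mathrm n)})^{\rm reg})={\rm mod}(G:X^{\rm reg})$. This handles the regular sheets, but to get equality of the full modalities one also needs to relate ${\rm mod}(G:X)$ with ${\rm mod}(G:X^{(\mathrm n)})$. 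Here I would invoke the description of modality via all $G$-stable irreducible locally closed subsets together with transcendence degree (the ``Similarly'' paragraph at the end of Section~3): every such subset $Z\subseteq X$ has preimage $\nu^{-1}(Z)$ whose components map onto $Z$ by a $G$-equivariant dominant morphism of the same dimension (as $\nu$ is finite), and conversely every $G$-stable irreducible locally closed $W\subseteq X^{(\mathrm n)}$ maps onto its image $\overline{\nu(W)}$, again equidimensionally; applying Lemma~\ref{moma}(i) componentwise shows the two suprema of ${\rm tr\,deg}_k k(\cdot)^G$ agree, giving ${\rm mod}(G:X^{(\mathrm n)})={\rm mod}(G:X)$.

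\medskip

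For part~(ii), modality-regularity of $X$ means ${\rm mod}(G:X)={\rm mod}(G:X^{\rm reg})$ and similarly for $X^{(\mathrm n)}$. Combining the two equalities established in part~(i)---namely ${\rm mod}(G:X^{(\mathrm n)})={\rm mod}(G:X)$ and ${\rm mod}(G:(X^{(\mathrm n)})^{\rm reg})={\rm mod}(G:X^{\rm reg})$---one sees immediately that ${\rm mod}(G:X)={\rm mod}(G:X^{\rm reg})$ holds if and only if ${\rm mod}(G:X^{(\mathrm n)})={\rm mod}(G:(X^{(\mathrm n)})^{\rm reg})$. So (ii) is a formal consequence of (i).

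\medskip

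The only subtle point---and thus the main obstacle---is the equality ${\rm mod}(G:X^{(\mathrm n)})={\rm mod}(G:X)$ for the \emph{full} modalities rather than just the regular sheets; Lemma~\ref{moma}(ii) only gives the inequality ``$\geqslant$'' in one direction (for surjective morphisms), and the reverse needs the finiteness of $\nu$ so that dimensions of $G$-orbits are unchanged under $\nu$ and $\nu^{-1}$. Concretely, for a family $F\in\mathscr F(X)$ of $s$-dimensional orbits, an irreducible component $\widetilde F$ of $\nu^{-1}(F)$ is $G$-stable (connectedness of $G$), maps onto $F$ finitely, hence consists of $s$-dimensional orbits and has $\dim\widetilde F=\dim F$, so ${\rm mod}(G:X^{(\mathrm n)})\geqslant\dim\widetilde F-s=\dim F-s$; taking the maximum over $F$ gives ${\rm mod}(G:X^{(\mathrm n)})\geqslant{\rm mod}(G:X)$. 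The reverse inequality is Lemma~\ref{moma}(ii) applied to $\nu$. I expect the write-up to be short once the bookkeeping about components, $G$-stability, and equidimensionality of the fibers of the finite morphism $\nu$ is made explicit.
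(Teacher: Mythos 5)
Your argument is correct and is essentially the paper's: the paper offers no written proof beyond the assertion that Lemma~\ref{moma}(i) and \eqref{mod} (in its variant with $G$-stable irreducible locally closed subsets and transcendence degrees) entail the corollary, and your second paragraph supplies exactly the intended bookkeeping — pushing such subsets forward along the finite $G$-equivariant morphism $\nu\colon X^{(\mathrm n)}\to X$ and pulling them back to dominating components of preimages, with Lemma~\ref{moma}(i) giving equality of transcendence degrees in each case, after which (ii) is formal. One small slip in your final paragraph: Lemma~\ref{moma}(ii) applied to $\nu$ yields ${\rm mod}(G:X^{(\mathrm n)})\geqslant{\rm mod}(G:X)$ (the source sits on the left), i.e.\ the same inequality you had just derived directly, not the reverse one; the genuinely needed reverse inequality ${\rm mod}(G:X)\geqslant{\rm mod}(G:X^{(\mathrm n)})$ is the one you already obtain in your second paragraph by taking images $\overline{\nu(W)}$ of $G$-stable irreducible subsets $W\subseteq X^{(\mathrm n)}$ and applying Lemma~\ref{moma}(i), so the proof as a whole is complete.
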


  \begin{lemma}\label{torus} Let $T$ be a torus and let $Y$ be an irreducible $T$-variety. Then
  \begin{enumerate}[\hskip 2mm \rm (i)]
  \item the stabilizer of any point of an open subset of $\,Y$\;is
the kernel of $T$-action on $Y$;
      \item %%this action
$Y$ is modality-regular.
      \end{enumerate}
  \end{lemma}
  \begin{proof}
First, we may (and shall) assume that $T$ acts on $Y$ faithfully.\;Next, by Corollary \ref{normal}, replacing $Y$ by $\widetilde Y$, we may (and shall) assume that $Y$ is normal.\;By \cite[Cor.\;2, p.\;8]{S74}, then $Y$ is covered by $T$-stable
  affine open subsets.\;Whence
   we may (and shall) assume that $Y$ is affine.

  (i) As $Y$ is affine,  by \cite[Thm.\;1.5]{PV94}, we may (and shall) assume that $Y$ is a closed $T$-stable subset of a finite-dimensional algebraic $T$-module and $Y$ does not lie in a proper $T$-submodule of $V$.
The action of $G$ on $V$ is faithful because that on $Y$ is.\;As $T$ is a torus, $V$ is a direct sum of the $T$-weight subspaces.\;Let $U$ be the complement in $V$ to the union of these subspaces.\;The stabilizer of any point of $U$ coincides with the kernel of the action on $V$,
  hence is trivial. As, by construction, $Y\cap U\neq \varnothing$, this proves (i).

  (ii) In view of
  Definition\;\ref{reg}, the proof  of \cite[Prop.\;1]{V86} can be viewed  as
   that of (ii).\;For the sake of completeness, below is a different proof.

By (i), we
have
   $ {\rm mod}(T:Y^{\rm reg})=\dim Y-\dim T. $
Let $S$ be a sheet in the $T$-variety $Y$, and let $T_0$ be the kernel of the action of $T$ on $S$.\;Then
  $ {\rm mod}(T:S)=\dim S-(\dim T-\dim T_0).$

  Since $Y$ is affine, the morphism $\pi\colon Y\to Y/\!\!/T_0=:
{\rm Spec}\,k[Y]^{T_0}$ induced by the identity embedding  $k[Y]^{T_0}\hookrightarrow k[Y]$
is the categorical quotient for the action of $T_0$ on $Y$.\;As $T_0$ act on $Y$ faithfully, (i) yields
  $ \dim Y/\!\!/T_0\leqslant \dim Y-\dim T_0.$

  Since $S$ is pointwise fixed by $T_0$ and $\pi$ separates closed $T_0$-orbits
(see \cite[Thm.\;9.4]{PV94}),
  we have $\dim \pi (S)=\dim S$; whence
  $ \dim S\leqslant \dim Y/\!\!/T_0
  \leqslant  \dim Y-\dim T_0.$
  Combining this information, we obtain
  \begin{align*}
  {\rm mod}(T:S)
  &=\dim S- \dim T +
  \dim T_0\\
 & \leqslant  \dim Y-\dim T_0-\dim T+\dim T_0\\
 &={\rm mod}(T:Y^{\rm reg}).
 \end{align*}
This completes the proof.\end{proof}

\section{\bf 7.\;Main result: proof}
We shall prove the implications
(M)$\Rightarrow$(F)$\Rightarrow$(G)$\Rightarrow$(M).

\vskip 1mm

$1$.\;The implication  (M)$\Rightarrow$(F) is clear.

$2$.\;We now turn to the proof of the implication (F)$\Rightarrow$(G).

Assume that $G$ enjoys property (F).\;Let $R$ be the radical of $G$.\;Since the group $G/R$ is semisimple, our assumption, Lemma \ref{reduction}, and Proposition \ref{ss} entail that $G/R$ is trivial, i.e.,
$G$ is solvable.\;Whence $G=T\ltimes U$, where $T$ is a maximal torus and $U$ is the unipotent radical of $G$.
We should show that
either $U$ is trivial
or $U$ is isomorphic to ${\mathbf G}_a$ and $G$ is commutative.\;Arguing
on the contrary, we suppose that
    this is not so.

    Then $U$ is a nontrivial  unipotent group.\;Hence  there exists
a chain $\{e\}=U_1\varsubsetneq U_2\varsubsetneq\cdots \varsubsetneq U_{d}=U$ of closed connected subgroups, normal in $G$, such that $d\geqslant 2$, and the successive quotients are
one-dimensional; see \cite[10.6]{B91}.

We claim that $d=2$.\;Indeed, if this is not the case, the above chain contains $U_3$.\;Since
$\dim U_3=2$,
 Proposition
\ref{2u}
and Lemma \ref{reduction} yield contra\-diction with the fact that
$G$ enjoys property
(F). Thus $d=2$; whence $U$ is isomorphic to\;${\mathbf G}_a$.

Next, the assumption that $G$ is not commutative
means that
the conju\-gat\-ing action of $T$ on $U$ is nontrivial.\;As $T$ is generated by its one-dimensional subtori, there is such a subtorus  $T'$
not lying in the kernel of this action.\;Then
$T'U$
is a noncommutative closed connected two-dimensional subgroup of $G$; see \cite[2.2]{B91}.\;Hence it is isomorphic to
$S(n)$ for some $n\neq 0$; see case (S3) in Section 5.\;By Proposition
\ref{S(n)neq0} and Lemma \ref{reduction}, this is impossible since $G$ enjoys property
(F).\;This contradiction proves the implica\-tion (F)$\Rightarrow$(G).

\vskip 1mm

$3$.\;Now we turn to the proof of the last implication (G)$\Rightarrow$(M).

Assume that (G) holds and $G$ acts  on an irreducible variety $X$.\;We should show that this action is modality-regular.\;In view of
Lemma\;\ref{torus}(ii), we should consider only the case, where
$G$ is
a direct product of two sub\-groups:
   \begin{equation}\label{TtimesU}
   G=T\times U,\quad \mbox{$T$ is a torus, $U$ is isomorphic to ${\mathbf G}_a$.}
   \end{equation}
 Below, exploring the actions of the subgroups of $\,G$ on $X$, we always mean
 the actions obtained by restricting
the given action of $\,G$ on $X$.

We may (and shall) assume that $G$ acts on $X$ faithfully.\;In view of
Corollary \ref{normal}(ii), replacing $X$ by its normalization,
we also may (and shall) assume that $X$ is normal.

Notice that
 since the elements of $T$ (respectively, $U$) are semisimple (respec\-ti\-vely,
  unipo\-tent), and the $G$-stabilizers
  of points
  of $X$, being closed in $G$, contain the Jordan decomposi\-tion com\-po\-nents of
 their ele\-ments,  for these stabilizers we have,
in view of\;\eqref{TtimesU}, the equalities:
 \begin{equation}\label{stabi}
 G_x=T_x\times U_x
 \quad\mbox{for every $x\in X$}.
 \end{equation}
As $G$ acts on $X$ faithfully, from \eqref{stabi}, \eqref{TtimesU}, and Lemma \ref{torus}(i) we infer that
\begin{equation}\label{stabili}
\mbox{$G_x$ is finite for every $x\in X^{\rm reg}$.}
\end{equation}

Let $S$ be a sheet of the action of $T$ on $X$.\;As $T$ and $U$ commute and both are connected, $S$ is $U$-stable and every sheet $C$ of the action of $U$ on $S$ is $T$-stable, hence $G$-stable.\;Consider the set of all $C$'s, obtained in this way when $S$ runs over all sheets of the action of
%%$G$
$T$ on $X$.\;This set is finite; we fix a numbering  of its elements: $C_1,\ldots, C_n$.\;The construction and
the condition
$\dim U=1$ yield the following:
\begin{enumerate}[\hskip 5mm\rm(i)]
\item[\rm(C1)] $X=C_1\cup\ldots\cup C_n$;
\item[\rm(C2)] every $C_i$ is a locally closed irreducible $G$-stable subset of $X$;
\item[\rm(C3)]  all $T$-orbits in $C_i$ have the same dimension $d_i$ for every $i$;
\item[\rm(C4)] for every $i$,
%%=1,\ldots, n$,
either $C_i^U=C_i$ or $\dim U\cd x=1$ for all $x\in C_i$.
\end{enumerate}

The construction implies that $X^{\rm reg}$ is one of these subsets; we assume that
\begin{equation}\label{XC}
X^{\rm reg}=C_1.
\end{equation}
In view of (C3) and \eqref{stabili}, we have
\begin{equation}\label{2dgp}
   \begin{split}
   {\rm mod}(T:C_i)&=\dim C_i-d_i\;\;\mbox{for every $i$},\\[-.3mm]
   d_1&=\dim T.
\end{split}
   \end{equation}
By Lemma \ref{torus}(ii), the action of $T$ on $X$ is modality-regular, so \eqref{2dgp} yields
   \begin{equation}\label{2mrt}
   \dim X-\dim T\geqslant \dim C_i-d_i \;\;\mbox{for every $i$}.
   \end{equation}

From \eqref{stabi}, (C3), (C4), we deduce that
\begin{equation}\label{2mdGS}
{\rm mod}(G:C_i)=\begin{cases}
\dim C_i-d_i&\mbox{if $C_i^U=C_i$ },\\
\dim C_i-d_i-1&\mbox{if
$C_i^U=\varnothing$.}
\end{cases}
\end{equation}
In view of \eqref{TtimesU}, \eqref{stabili}, we have
\begin{equation}\label{2fin}
{\rm mod}(G:X^{\rm reg})= \dim X-\dim T-1.
\end{equation}

Arguing on the contrary, we now suppose that the action of $G$ on $X$
is not modality-regular, i.e.,
\begin{equation}\label{2contr}
{\rm mod}(G:X)>{\rm mod}(G:X^{\rm reg}).
\end{equation}
Then, as a first step, we shall find a certain $C_{i_0}$ that has some special properties.
The next step will be analysing these properties which even\-tually will lead us to a sought-for contradiction.

Namely, by \eqref{2contr} and Lemma \ref{SI}, there is $i_0$ such that
\begin{equation}\label{2se}
{\rm mod}(G:X^{\rm reg})<{\rm mod}(G:C_{i_0}).
\end{equation}
Combining \eqref{2mrt}, \eqref{2mdGS}, \eqref{2fin}, \eqref{2se}, we obtain
\begin{equation}\label{2ineqs}
\begin{split}
\dim C_{i_0}-d_{i_0}-1&\overset{\rm\eqref{2mrt}}\leqslant \dim X-\dim T-1\\
&\overset{\eqref{2fin}}{=\hskip -1.4mm=}{\rm mod}(G:X^{\rm reg})\\
&\overset{\rm\eqref{2se}}{<} {\rm mod}(G:C_{i_0})\\
&\overset{\rm\eqref{2mdGS}}{=\hskip -1.4mm=}\begin{cases}
\dim C_{i_0}-d_{i_0}&\mbox{if $C_{i_0}^U=C_{i_0}$},\\
\dim C_{i_0}-d_{i_0}-1&\mbox{if $C_{i_0}^U=\varnothing$.}
\end{cases}
\end{split}
\end{equation}
In turn, from \eqref{2ineqs} we infer the following:
\begin{align}\label{2*1}
C_{i_0}^U&= C_{i_0},\\
    \label{2**}
    \dim C_{i_0}-d_{i_0}&= \dim X-\dim T.
    \end{align}

    Denote by $T_{i_0}$ be the identity component of the kernel of  the action of $\,T$ on $C_{i_0}$ and consider in $G$ the closed subgroup
\begin{equation}\label{H}
H:=T_{i_0}\times U.
\end{equation}
By (C3) and Lemma \ref{torus}(i), we have
%%By Lemma \ref{torus}(i), we have
\begin{align}\label{kerT1}
\dim T_{i_0}&=\dim T-d_{i_0},\\
\dim H&=\dim T-d_{i_0}+1.\label{kerT2}
\end{align}
From \eqref{2*1} and the definitions of $T_{i_0}$ and $H$ we infer that
\begin{equation}\label{S0fixed}
C_{i_0}^H=C_{i_0}.
\end{equation}

By \cite[Cor.\;2, p.\;8]{S74}, as $X$ is normal,
it is covered by the $T_{i_0}$-stable
  affine open subsets.\;Whence there is a $T_{i_0}$-stable
  affine open subset $A$ in $X$ such that
  \begin{align}\label{AS0}
  &\mbox{$A\cap C_{i_0}$ is a dense open subset of $C_{i_0}$,}\\
  &\mbox{$A\cap X^{\rm reg}$ is a dense open subset of $A$.}\label{AS02}
  \end{align}

  Consider the categorical quotient for the affine $T_{i_0}$-variety $A$:
  \begin{equation*}
  \pi\colon A\to A/\!\!/T_{i_0}=:{\rm Spec}\,k[A]^{T_{i_0}}.
  \end{equation*}
 By \eqref{stabili}, we have $\dim T_{i_0}\cd x=\dim T_{i_0}$ for every $x\in X^{\rm reg}$.
 This, the fiber dimension theorem,
 the $T_{i_0}$-equivariance of $\pi$,
 and the equality $\dim A=\dim X$ then yield:
\begin{equation}\label{leAT0}
\dim A/\!\!/T_{i_0}\leqslant
\dim A-\dim T_{i_0}
\overset{\rm \eqref{kerT1}}{=\hskip -1.4mm=}\dim X-\dim T+d_{C_{i_0}}
\overset{\rm \eqref{2**}}{=\hskip -1.4mm=}\dim C_{i_0}.
\end{equation}

On the other hand, since $k[A]^{T_{i_0}}$ separates disjoint closed $T_{i_0}$-stable sub\-sets of $A$ (see \cite[Thm.\;9.4]{PV94}),  we have
\begin{equation}\label{geAT0}
\dim C_{i_0}\overset{\rm \eqref{S0fixed}}{=\hskip -1.4mm=}\dim \pi (C_{i_0})\leqslant \dim A/\!\!/T_{i_0}
\end{equation}

From \eqref{leAT0}, \eqref{geAT0} we obtain the equalities
\begin{equation}\label{dimAT0S0}
\dim C_{i_0}=\dim A/\!\!/T_{i_0}=\dim A-\dim T_{i_0}.
\end{equation}
In turn, from \eqref{dimAT0S0}, \eqref{AS0},
\eqref{AS02}, and the fiber dimension theorem,  we
deduce the existence of
a dense open subset $Q$
of $A/\!\!/T_{i_0}$
that enjoys  the following proper\-ties:
\begin{align}\label{Qin}
  &Q\subseteq \pi(A\cap C_{i_0})\cap \pi (A\cap X^{\rm reg}),\\
  &\mbox{$\pi^{-1}(q)$ is equidimensional of dimension $\dim T_{i_0}$ for every $q\in Q$.}\label{fiberdim}
\end{align}

Now take a point $x\in \pi^{-1}(Q)\cap X^{\rm reg}$.\;In view of \eqref{stabili},
we have
\begin{equation}\label{irreco}
\dim T_{i_0}\cd x=\dim T_{i_0}.
\end{equation}
As orbits are open in their closures, and $T_{i_0}\cd x\subseteq \pi^{-1}(\pi(x))$,
from \eqref{fiberdim}, \eqref{irreco}
we infer that $T_{i_0}\cd x$ is a dense open subset of an irreducible compo\-nent of
the fiber $\pi^{-1}(\pi(x))$.\;In view of \eqref{Qin},  this fiber contains a point $s\in C_{i_0}$, so we have
\begin{equation}\label{xs}
\pi^{-1}(\pi(x))=\pi^{-1}(\pi(s)).
\end{equation}

As,  by \eqref{S0fixed}, the point $s$ is $T_{i_0}$-fixed, it lies in the closure of  $\,T_{i_0}\cd x$ in $A$ (and a fortiori in $X$); see \cite[Thm.\;4.7]{PV94}.\;Thus $\,T_{i_0}\cd x$ belongs to the set $\mathscr S$ of all $T_{i_0}$-orbits
$\mathcal O$ in $X$
that enjoy the following properties:
 \begin{enumerate}[\hskip 4.2mm\rm(a)]
 \item $\dim \mathcal O=\dim T_{i_0}$;
 \item the closure $\overline {\mathcal O}$ of $\mathcal O$ in $X$ contains $s$.
 \end{enumerate}

 We claim that $\mathscr S$ is finite.\;Indeed, if
 a $T_{i_0}$-orbit
 $\mathcal O$ belongs to $\mathscr S$, then $\overline {\mathcal O}\cap A$ is an open
 neighbourhood of $s$ in $\overline {\mathcal O}$, therefore $\mathcal O\cap A\neq \varnothing$.
 Whence $\mathcal O$ lies in $A$ and contains $s$ in its closure in $A$.\;This and \eqref{xs}  show that $\mathcal O$ is a $\dim T_{i_0}$-dimensional $T_{i_0}$-orbit of $\pi^{-1}(\pi(x))$; whence, as above, $\mathcal O$ is a dense open subset of an irreducible component of $\pi^{-1}(\pi(x))$.\;The claim now follows from the finiteness of the set of irreducible components of
 $\pi^{-1}(\pi(x))$.

The finiteness of  $\mathscr S$ implies that the union of all $T_{i_0}$-orbits from $\mathscr S$ is a locally closed subset $Z$ of $X$ whose irreducible components are these orbits.\;As we proved above, one of these components is $T_{i_0}\cd x$.\;Since $U$ commutes with $T_{i_0}$ and, by \eqref{2*1}, $s$ is a $U$-fixed point, the subset $Z$ is $U$-stable.\;The connectedness of $U$ then entails that each irreducible component of this subset is $U$-stable.\;In par\-ti\-cu\-lar, $T_{i_0}\cd x$ is $U$-stable. Whence $T_{i_0}\cd x$ is $H$-stable and therefore we have
\begin{equation}\label{HT0orb}
H\cd x=T_{i_0}\cd x.
\end{equation}

In view of  \eqref{stabili}, \eqref{H}, \eqref{TtimesU},
we now obtain
the sought-for contradiction:
 \begin{equation*}
 \dim T_{i_0}+1=\dim H=\dim H\cd x\overset{\rm \eqref{HT0orb}}{=\hskip -1.4mm=}\dim T_{i_0}\cd x=\dim T_{i_0}.
 \end{equation*}
This completes the proof.\hfill $\square$

\end{document}